\theoremstyle{plain}
\newtheorem{thmx}{Theorem} 
\newtheorem{thm}{Theorem}[section]  
\newtheorem{cor}[thm]{{Corollary}} 
\newtheorem{corx}[thmx]{Corollary}
\newtheorem{lem}[thm]{{Lemma}}
\newtheorem{prop}[thm]{Proposition}
\newtheorem{defi}[thm]{Definition}
\theoremstyle{remark}
\newtheorem{rmk}[thm]{Remark}
\numberwithin{equation}{section}
\def\Spec{\mathrm{Spec}}
\def\log{\mathrm{log}\,}
\def\Sym{\mathrm{Sym}}
\begin{document} 
\title[Holomorphic curves in Base Spaces]{Holomorphic curves in Base Spaces of Families of Polarized Manifolds} 
	\author{Steven Lu}
		 \address{D\'epartement de math\'ematiques
		 	Universit\'edu Qu\'ebec \`a Montr\'eal
		 	Case postale 8888, succursale centre-ville
		 	Montréal (Québec) H3C 3P8}
	\email{lu.steven@uqam.ca}
	\author{Ruiran Sun}
		 \address{Institut fur Mathematik, Universit\"at Mainz, Mainz, 55099, Germany}
	\email{ruirasun@uni-mainz.de}
        \author{Kang Zuo}
        	 \address{Institut fur Mathematik, Universit\"at Mainz, Mainz, 55099, Germany}
        \email{zuok@uni-mainz.de}

\begin{abstract}
For a smooth family $V \to U$ of polarized manifolds with semi-ample canonical sheaves, we show the following result: any entire curve must be contained in the fibers of the classifying map from the base space $U$ to the moduli space. This settles the  Relative Isotriviality Conjecture, \cite[Conjecture 1.5]{DLSZ}.           
\end{abstract}

\subjclass[2010]{32Q45, 32A22, 53C60}
\keywords{relative isotriviality conjecture, Higgs bundles, negatively curved Finsler metric,  moduli of polarized manifolds}

\maketitle
%\tableofcontents 

\section{Introduction}
In \cite{VZ-1} Viehweg and the third named author proved the Brody hyperbolicity of the moduli stack of canonically polarized complex manifolds, that is, for any such family $V \to U$ with quasi-finite classifying map, there is no nonconstant entire curves $\mathbb{C} \to U$.\\
In this notes, we study the distribution of entire curves in the base manifold of family of polarized manifolds with \emph{non-maximal} variation. Our main result is
\begin{thmx}[= Theorem~\ref{main-thm}]\label{thm-A}
Let $(f:\,V \to U,\mathcal{L})\in \mathscr{M}_h(U)$ be a smooth family of polarized manifolds with semi-ample canonical sheaves and fixed Hilbert polynomial $h$.  Let $\varphi:\, U \to \mathcal{M}_h$ be the induced classifying map from the base $U$ of the family to the coarse moduli space $ \mathcal{M}_h$. Let $\gamma:\, \mathbb{C} \to U$ be an entire curve. Then the image curve $\gamma(\mathbb{C})$ is contained in a fiber of $\varphi$.  
\end{thmx}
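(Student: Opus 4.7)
The plan is to deduce the theorem via the Viehweg--Zuo Higgs bundle together with the negatively curved pseudo-Finsler metric construction of \cite{DLSZ}, and then apply the Ahlfors--Schwarz lemma. First I would pass to a smooth projective compactification $Y\supset U$ with $D=Y\setminus U$ simple normal crossing. Using Viehweg's weak semistable reduction and the cyclic cover construction of \cite{VZ-1}, one produces, possibly after a birational base change, a system of logarithmic Hodge bundles
\[
(E,\theta)=\Bigl(\bigoplus_{p+q=n}E^{p,q},\ \theta\colon E^{p,q}\to E^{p-1,q+1}\otimes\Omega^1_Y(\log D)\Bigr),
\]
together with a comparison morphism $\tau_1\colon T_Y(-\log D)\to E^{1,0}\otimes (E^{0,1})^{\vee}$ factoring through the Kodaira--Spencer map of the family. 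The semi-ampleness of $\omega_{V/U}$ is precisely what activates Viehweg's positivity of $f_*\omega_{V/U}^{\otimes N}$, and hence guarantees the existence of this entire package in the present polarized (non-canonically-polarized) setting.

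Next, iterating $\theta$ yields, for each $k\ge 1$, comparison maps $\tau_k\colon \mathrm{Sym}^k T_Y(-\log D)\to E^{n-k,k}\otimes (E^{n,0})^{-1}$, and I would pull back the Hodge metric on $E$ through these iterates to assemble a global pseudo-Finsler pseudo-metric $h$ on $T_U$. The main curvature computation of \cite{DLSZ}, adapted essentially verbatim, gives an estimate of the form
\[
\sn\,\partial\bar\partial\log h(v)\ \ge\ c\, h(v)\cdot\omega_P
\]
on the locus where $h(v)>0$, with $c>0$ a universal constant and $\omega_P$ a Poincar\'e-type form. Now given any entire curve $\gamma\colon\C\to U$, if $h(\gamma'(t))$ were not identically zero, pulling $h$ back by $\gamma$ would furnish a negatively curved pseudo-metric on $\C$, contradicting the Ahlfors--Schwarz lemma. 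Hence the derivative $d\gamma$ must take values everywhere in the degeneracy locus $\mathcal{K}:=\{h=0\}\subset TU$.

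The main obstacle, and the new content beyond \cite{DLSZ}, is to identify $\mathcal K$ precisely with the tangent bundle to the fibers of $\varphi$. The inclusion $\ker d\varphi\subset\mathcal{K}$ is essentially formal: tangent vectors to isotrivial directions kill the Kodaira--Spencer class and therefore kill every iterate $\tau_k$. The reverse inclusion is more delicate: I would argue that simultaneous vanishing of $\tau_k(v^{\otimes k})$ for all $k\ge 1$ at a generic point of $U$ forces the first-order deformation along $v$ to be trivial to all orders on the polarized fiber, and then invoke the infinitesimal Torelli / effective injectivity input available for polarized manifolds with semi-ample canonical class (applied on an \'etale cover of $\mathcal{M}_h$ where the classifying map is well defined) to conclude $d\varphi(v)=0$. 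Once $d\gamma(T\C)\subset\ker d\varphi$ is established everywhere, the composition $\varphi\circ\gamma$ has vanishing derivative and is therefore constant, placing $\gamma(\C)$ inside a single fiber of $\varphi$.
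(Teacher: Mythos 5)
Your proposal contains a genuine gap at its core. You attempt to construct the Viehweg--Zuo Finsler pseudometric directly on $T_U$, and then argue that its degeneracy locus $\mathcal{K}$ coincides with $\ker d\varphi$. But the Viehweg--Zuo construction requires, as an essential input, the \emph{bigness} of the Viehweg line bundle $A = \det\bigl(f_*\omega_{X/Y}^{\otimes\nu}\bigr)$: one needs to twist the Higgs bundle by $A^{-1}$ and then metrize $A$ with positive curvature in order to obtain the negative curvature estimate. Since $\kappa(A) = \mathrm{Var}(f)$, when the family has non-maximal variation $A$ is \emph{not} big on $U$, and the construction you invoke simply does not produce a nontrivial negatively curved pseudometric on $U$. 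The semi-ampleness of the relative canonical sheaf gives positivity of direct images, but it cannot substitute for maximal variation in the construction of the curvature estimate. This is not a technicality; it is precisely the difficulty that forces the paper to work ``on the image $\varphi(U)$'' rather than on $U$ itself.

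The paper resolves this by a different mechanism. There is no universal family over $\mathcal{M}_h$, so one uses Koll\'ar's theorem on finite covers of the coarse moduli space carrying universal families to manufacture a finite cover $U^{\#}\to\varphi(U)$ over which a \emph{maximally varied} family lives, together with a finite $U'\to U$ fitting into a commutative square (Lemma~\ref{diagram}). The Viehweg--Zuo Finsler metric is then constructed on $Y^{\#}$, where $A^{\#}$ \emph{is} big, and the compatibility of deformation Higgs bundles (Lemma~\ref{zeta}, Corollary~\ref{Omega-factor}, and crucially Lemma~\ref{factor-through}) shows that the chain of Higgs maps factors through $\psi_{\mathbb{C}}^*T_{\mathbb{C}}$, so the pulled-back metric is a \emph{ramified} pseudometric on $\mathbb{C}'$ with respect to the finite cover $\psi_{\mathbb{C}}:\mathbb{C}'\to\mathbb{C}$. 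The contradiction then comes from the \emph{ramified} Ahlfors--Schwarz lemma (Lemma~\ref{orbi-AS}), not the classical one. Your plan of invoking ``infinitesimal Torelli / effective injectivity'' to prove $\mathcal{K}\subset\ker d\varphi$ is an additional unestablished claim --- no such Torelli-type statement is available at the generality of polarized manifolds with semi-ample canonical class, and nothing in the paper relies on it. In short: the two missing ideas are the passage to a maximally varied family on a finite cover, and the ramified Ahlfors--Schwarz argument that makes the descent along the correspondence work.
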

The basics of the moduli functor $\mathscr{M}_h$ and the associated coarse moduli scheme $\mathcal{M}_h$ are recalled in section~\ref{compatibility}. As a direct corollary of Theorem~\ref{thm-A}, we obtain
\begin{corx}\label{cor-B}
If the base space $U$ contains a Zariski dense entire curve, then the family $f:\, V \to U$ is isotrivial.  
\end{corx}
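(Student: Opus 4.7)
My plan is to deduce Corollary~\ref{cor-B} from Theorem~\ref{thm-A} by a short topological/algebro-geometric argument. The essential point is that a Zariski dense subset of $U$ cannot sit inside a proper Zariski closed subvariety, and fibers of the classifying map are Zariski closed. First I would note that since the image of an entire curve is connected, the hypothesis that some entire curve $\gamma\colon \mathbb{C}\to U$ has Zariski dense image forces $U$ to be irreducible, which justifies talking about \emph{the} Zariski closure later on.

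Next, apply Theorem~\ref{thm-A} to the given Zariski dense entire curve $\gamma$. The conclusion is that $\gamma(\mathbb{C})$ lies in a single fiber $\varphi^{-1}(m)$ of the classifying map, for some point $m\in \mathcal{M}_h$. Since $\varphi$ is a morphism of (coarse moduli) algebraic spaces and $\{m\}$ is a closed point of $\mathcal{M}_h$, the fiber $\varphi^{-1}(m)$ is a Zariski closed subset of $U$. Taking Zariski closures in the inclusion $\gamma(\mathbb{C})\subseteq \varphi^{-1}(m)$ then gives
\[
U \;=\; \overline{\gamma(\mathbb{C})}^{\mathrm{Zar}} \;\subseteq\; \varphi^{-1}(m) \;\subseteq\; U,
\]
so $\varphi^{-1}(m)=U$ and the classifying map $\varphi$ is constant. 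By the defining property of the coarse moduli scheme $\mathcal{M}_h$, this means all (geometric) fibers of $f\colon V\to U$ are isomorphic as polarized manifolds with Hilbert polynomial $h$, i.e.\ the family is isotrivial.

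The only real content sits inside Theorem~\ref{thm-A}; once it is available, the deduction is purely formal. The single point that needs to be checked is that fibers of the classifying map $\varphi\colon U \to \mathcal{M}_h$ are Zariski closed in $U$, which is a standard property of the moduli functor $\mathscr{M}_h$ and its coarse moduli scheme (to be recalled in Section~\ref{compatibility}), so I do not anticipate any serious obstacle in writing out this corollary.
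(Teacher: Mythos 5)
Your proposal is correct and follows exactly the deduction the paper intends: the paper states Corollary~\ref{cor-B} as an immediate consequence of Theorem~\ref{thm-A} without writing out a proof (and repeats the assertion as the ``In particular'' clause of Theorem~\ref{main-thm}), and your argument --- Zariski closedness of the fiber $\varphi^{-1}(m)$ forces $U=\varphi^{-1}(m)$, so $\varphi$ is constant, which is the paper's working definition of isotriviality (cf.\ the phrasing ``non-isotrivial, i.e.\ the induced classifying map $\varphi$ is non-constant'' in Theorem~\ref{thm-C}) --- is precisely that formal deduction. There is no genuinely different route here, and nothing to flag.
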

Corollary~\ref{cor-B} can be regarded as \emph{a hyperbolic version of Campana’s isotriviality conjecture} \cite[Conjecture~13.21]{Cam11}. In Campana's original conjecture the base space is assumed to be \emph{speical}. See the joint paper of the first named author with Winkelmann \cite{LW} about the relation of special varieties and the property of containing Zariski dense entire curves. This is the reason that we refer Theorem~\ref{thm-A} as a relative isotriviality theorem.\\[-2mm]

Theorem~\ref{thm-A} has been proved  for families of canonically polarized manifolds in \cite{Den19}. There, the Weil-Petersson metric was used to show the moduli stack is hyperbolic as an orbifold. However, there is  in general no Weil-Petersson metric for a family of polarized manifolds. Our approach is roughly to apply Viehweg-Zuo's original argument but ``on the image $\varphi(U)$'' to show that the composed entire curve $\varphi \circ \gamma$ is constant.\\[-3mm]

We briefly explain our strategy. The property of the moduli scheme and a result of Koll\'ar on the existence of fine moduli spaces enable us to construct a commutative diagram of surjective maps
\[
\xymatrix{
U' \ar[r] \ar[d] & U^{\#} \ar[d]\\
U \ar[r] & \varphi(U)
}
\] 
where $U'$ is finite over $U$ and $U^{\#}$ carries a family with \emph{maximal variation}. Furthermore, the base changes to $U'$ of the family over $U$ and that over $U^{\#}$ coincide.  Lemma~\ref{diagram}, the technical basis of our arguments, provides good birational models of these base spaces so that we can study their geometric relationships. With it,  we follow the line of reasoning in \cite{VZ-1} to construct complex Finsler pseudometrics ``on $U^{\#}$" with nice curvature properties. Regarding the finite cover $U' \to U$ as a correspondence from $U$ (to $U^{\#}$), we need to consider the ``ramified entire curve'' $\mathbb{C}'=$ normalization of $\mathbb{C}\times_U U'$, regarded as a holomorphic correspondance from $\mathbb{C}$ in the diagram 
\[
\xymatrix{
\mathbb{C}' \ar[r]^{\gamma'} \ar[d] & U' \ar[r]^{\eta} \ar[d] & U^{\#} \\
\mathbb{C} \ar[r]^{\gamma} & U  &\ \ \ \ .
}
\]\\[1mm]
%where $\mathbb{C}'$ is the corresponded ramified cover of $\mathbb{C}$.\\
The key in our metric argument is that the ``differential" of this holomorphic correspondance factors through $\mathbb{C}'$, resulting in our key technical Lemma~\ref{factor-through}.
The final step is to use the complex Finsler pseudometric on $Y^{\#}$ to show that the ramified entire curve $\eta \circ \gamma'$ is constant. Just like \cite{VZ-1}, we need a ramified version of Ahlfors-Schwarz lemma on $\mathbb{C}'$. This lemma is given in section~\ref{AS-lem}.\\

Our second result concerns the distribution of more general holomorphic curves in base manifolds of smooth polarized families. We first give a definition
\begin{defi}[Borel-Chern Hyperbolicity]
A complex variety $Y$ is said to be Borel-Chern hyperbolic if for any algebraic curve $C$, the graph $\Gamma_{\gamma}$ of every holomorphic map $\gamma:\, C \to Y$ is not Zariski densen in $C \times Y$.
\end{defi}
From the definition one notices that if $\mathrm{dim}\,Y=1$, it is equivalent to the Borel hyperbolicity of $Y$: for any algebraic curve $C$, every holomorphic map $\gamma:\, C \to Y$ is an algebraic morphism.\\
Now we can state our second main result:
\begin{thmx}\label{thm-C}
Let $(f:\,V \to U,\mathcal{L})\in \mathscr{M}_h(U)$ be a smooth family of polarized manifolds with semi-ample canonical sheaves and fixed Hilbert polynomial $h$. Suppose that the family is non-isotrivial, i.e. the induced classifying map $\varphi:\, U \to \mathcal{M}_h$ is non-constant. Then the base space $U$ is Borel-Chern hyperbolic.
\end{thmx}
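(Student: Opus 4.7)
The plan is to argue by contradiction. Suppose there exists an algebraic curve $C$ (which we may take to be smooth by passing to the normalization) and a holomorphic map $\gamma : C \to U$ whose graph $\Gamma_{\gamma}$ is Zariski dense in $C \times U$. Zariski density forces $\gamma(C)$ to be Zariski dense in $U$, and since $\varphi$ is non-constant with proper closed fibers, the composition $\varphi \circ \gamma : C \to \mathcal{M}_{h}$ cannot be constant---otherwise $\gamma(C)$ would lie in a single fiber of $\varphi$, a proper closed subvariety of $U$. The task thus reduces to showing that the non-constancy of $\varphi \circ \gamma$ leads to a contradiction.

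\textbf{Non-hyperbolic case.} Suppose $C$ is birational to $\mathbb{P}^{1}$, $\mathbb{C}$, $\mathbb{C}^{*}$, or an elliptic curve. Then there exists a holomorphic map $\psi : \mathbb{C} \to C$ with Zariski dense image, and $\gamma \circ \psi : \mathbb{C} \to U$ is an entire curve. Theorem~\ref{thm-A} forces $\gamma(\psi(\mathbb{C})) \subset \varphi^{-1}(\mathrm{pt})$; since this fiber is Zariski closed in $U$ and $\gamma(\psi(\mathbb{C}))$ is Zariski dense in $\gamma(C)$ (images of Zariski dense sets under dominant holomorphic maps are Zariski dense), we conclude $\gamma(C)$ sits in a single fiber, contradicting the non-constancy of $\varphi \circ \gamma$.

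\textbf{Hyperbolic case.} If $C$ is hyperbolic, Theorem~\ref{thm-A} does not apply directly. I would exploit the negatively curved complex Finsler pseudometric constructed on a smooth birational model of $U^{\#}$ during the proof of Theorem~\ref{thm-A}. Using Lemma~\ref{diagram}, lift $\gamma$ through the finite cover $U' \to U$: form $C' := $ normalization of $C \times_{U} U'$, the induced lift $\gamma' : C' \to U'$, and compose with $\eta : U' \to U^{\#}$ to obtain $\eta \circ \gamma' : C' \to U^{\#}$; the curve $C'$ remains hyperbolic, being a finite cover of the hyperbolic $C$. Then adapt the ramified Ahlfors--Schwarz lemma of Section~\ref{AS-lem} by replacing the flat metric on $\mathbb{C}$ with the Poincar\'e metric on $C'$: the pullback of the Finsler pseudometric via $\eta \circ \gamma'$ is bounded above by the Poincar\'e metric, and combining this with the positivity of the pseudometric on the maximal-variation locus of $U^{\#}$ forces the pullback to vanish identically. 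This in turn forces $\eta \circ \gamma'$, and hence $\varphi \circ \gamma$, to be constant---contradiction.

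The main obstacle is the hyperbolic case. The Finsler pseudometric constructed in the proof of Theorem~\ref{thm-A} is calibrated for the flat geometry of $\mathbb{C}$, so reworking the Ahlfors--Schwarz comparison against a hyperbolic Poincar\'e metric requires care near the boundary of $C$ in its smooth completion and along the ramification locus of $C' \to C$. It may also be necessary to strengthen the positivity conclusion for the Finsler pseudometric along the maximal-variation direction, in order for the vanishing of the pullback to descend to genuine constancy of $\varphi \circ \gamma$ rather than to a merely integrated estimate that is a priori compatible with non-constant maps.
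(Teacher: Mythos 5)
The non-hyperbolic case of your dichotomy is fine (and essentially equivalent to Theorem~\ref{thm-A}), but the hyperbolic case is where the content of Theorem~\ref{thm-C} lives, and your argument there has a fundamental gap: the ramified Ahlfors--Schwarz comparison, when the Euclidean metric on $\mathbb{C}$ is replaced by the Poincar\'e metric on a hyperbolic $C'$, only bounds the pullback Finsler pseudometric from above by a constant times the Poincar\'e metric. It does \emph{not} force vanishing. The vanishing conclusion (the Corollary at the end of Section~\ref{app}) relies crucially on being able to rescale the radius-$R$ disc and let $R \to \infty$; this rescaling degenerates the Poincar\'e metric to zero only in the parabolic case. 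On a hyperbolic base curve there is no such degeneration, so the Finsler pullback can be and generically will be non-zero, and $\varphi \circ \gamma$ will be non-constant. Indeed any non-constant algebraic map from a hyperbolic curve into the image $\varphi(U)$ gives a non-zero pullback, so a vanishing conclusion would be false. The sentence \enquote{forces the pullback to vanish identically} is therefore simply incorrect, and consequently the target you set at the outset --- that non-constancy of $\varphi \circ \gamma$ leads to a contradiction --- cannot be achieved because it is not true.

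What should replace constancy is algebraicity. The paper first observes (Lemma~\ref{BC-to-alg}) that it suffices to show $\varphi \circ \gamma : C \to \mathcal{M}_h$ is an algebraic morphism; once that is known, its graph is Zariski closed and has positive codimension in $C \times \varphi(U)$, and this pulls back to show $\Gamma_\gamma$ is not Zariski dense. Establishing algebraicity of $\varphi\circ\gamma$ is a big-Picard-type extension statement across the punctures of a completion $\bar{C}$, and it is proved not by Ahlfors--Schwarz but by a Nevanlinna-theoretic argument. Concretely, Proposition~\ref{ram-cri} takes as input the curvature inequality
\[
dd^c \log \Vert \gamma'_{\#}(z) \Vert^2_h \;\geq\; \epsilon \cdot \gamma^*_{\#}\psi^*_{\diamond}\omega
\]
(compared against the Fubini--Study form on $Y^\diamond$, not against the Poincar\'e metric), descends the resulting data from $C'$ to $C$ via the Galois-averaging trick used in Lemma~\ref{orbi-AS}, and then feeds the descended bounded metric $h_C$ and the two curvature inequalities into the logarithmic-derivative/characteristic-function estimates of \cite[\S3]{DLSZ} to conclude that $\varphi \circ \gamma$ extends holomorphically to $\bar{C} \to Y^\diamond$. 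Your proposal never analyzes boundary growth and therefore cannot recover this extension property; simply \enquote{reworking the Ahlfors--Schwarz comparison against the hyperbolic Poincar\'e metric} does not address it.
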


\noindent{\bf Acknowledgment.} The second named author thanks Ya Deng for the explanation of orbifold structures and his proof in \cite{Den19}.
The second and third named authors gratefully acknowledge support from SFB/Transregio 45 while the first named author thanks NSERC and CIRGET for theirs.

\section{Ramified Pseudometric and the Ahlfors-Schwarz Lemma}\label{AS-lem}
Let $C, C'$ be Riemann surfaces. Let $\psi_C:\, C' \to C$ be a holomorphic {\em ramified covering map}, i.e. proper and finite onto its image when restricted to each component of the preimage of every compact subset of $C$. For any point $p \in C'$, we denote by $\mathrm{Ram}(\psi_C,p):= \deg_p\psi_C-1$ the ramification index of $\psi_C$ at the point $p$. The set R$_\psi:=\{p \in C'\,\,|\,\, \mathrm{Ram}(\psi_C,p) \geq 1\}$ is discrete.

\begin{defi}\label{def_ram_metric}
We call a complex 2-tensor $g$ a {\bf ramified pseudometric} on $C'$ with respect to $\psi:\,C' \to C$ if it is conformally equivalent to the pull back of a hermitian metric $h$ on $C$ with a conformal factor $k\ge 0$ that is $C^2$ away from its zero set, i.e. $g=k\psi^*h$ for a continuous function $k$ on $C'$ that is $C^2$ away from its zero set.
%via the tangent map $d\psi:\, T_{C'} \to \psi^*T_C$. 
Let $(U',z)$ be a local chart of $C'$ centred at $p'\in C'$ and $(U,t)$ be a local chart of $C$ centred at $\psi(p')$. Then $g$  can be written in $V:=U'\cap\psi^{-1}(U)$ as
\begin{align}\label{local}
g = g(z) |\psi^*dt|^2.
\end{align}
Here $g(z) \not\equiv 0$ is a non-negative $C^0$ function on $V$ which is $C^2$ away from its zero set and shares the same support as that of $k$ on $V$, this support being independent of the choice of $h$.
\end{defi}

Note that we are grossly abusing the notation in using the same letter $g$ for the pseudometric $g$ on $C'$ and the function $g(z)$ which is only locally defined. Also, this local function $g(z)$ depends on the choice of the local coordinate $t$ on $C$, though no confusion should arise within normal contexts. 
%In general, one can define the notion of a ramified metric for all holomorphic maps $\psi$ between varieties $C'$ and $C$ of possibly unequal dimensions starting with a hermitian metric on $C$. 
The 2-tensor so defined is a semi-positive (real) sesquilinear 2-form on the tangent space of $C'$ at each point. \\[-1mm]

We denote the abstract support of $k$ by ${\mathcal U}_g:=\{z\,;k\ne 0\}\subset C'$. It is easily seen to be independent of the choice of the hermitian metric $h$ on $C$.\\[-3mm]

The main result of this section is the following 
\begin{lem}[Non-existence of ramified hyperbolic metric]\label{orbi-AS}
Let $\psi_{\mathbb{C}}:\, \mathbb{C}' \to \mathbb{C}$ be a holomorphic finite ramified covering map. And $g = g(z)|dz|^2$  is a ramified metric on $\mathbb{C}'$ with respect to $\psi_\mathbb{C}$. Then there cannot exist any positive number $\epsilon$ such that
\begin{align}\label{curv-orbi-AS}
\sqrt{-1}\, \partial_z\partial_{\bar{z}} \log g(z) \geq \epsilon \cdot g(z)
\end{align}
on $\mathbb{C}'$ in the sense of currents. 
\end{lem}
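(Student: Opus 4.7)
My plan is to adapt the classical Ahlfors--Schwarz argument, comparing $g$ with the pullback of the Poincar\'e metric on large disks while carefully handling the cancellation of singularities at ramification points. Assume for contradiction that $g\not\equiv 0$ (i.e.\ $\mathcal{U}_g\neq\emptyset$) and that such an $\epsilon>0$ exists. For each $R>0$ let $\omega_R$ denote the Poincar\'e metric on $\mathbb{D}_R\subset\mathbb{C}$, normalized so that $\sqrt{-1}\,\partial\bar\partial\log\omega_R=c\,\omega_R$ for a fixed positive constant $c$, and put $\Omega_R:=\psi_\mathbb{C}^{-1}(\mathbb{D}_R)$, which is relatively compact in $\mathbb{C}'$ by properness of $\psi_\mathbb{C}$. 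Writing $g=k\,\psi_\mathbb{C}^{*}h$ as in Definition~\ref{def_ram_metric}, I introduce
\[
u\;:=\;\frac{g}{\psi_\mathbb{C}^{*}\omega_R}\;=\;k\cdot\psi_\mathbb{C}^{*}\!\left(\frac{h}{\omega_R}\right).
\]
The common vanishing factor $|z|^{2(m-1)}$ at a ramification point of multiplicity $m$ cancels between numerator and denominator, so $u$ is a non-negative continuous function on $\Omega_R$, and $\log u$ is of class $C^2$ throughout $\mathcal{U}_g$. Since $\omega_R$ blows up on $\partial\mathbb{D}_R$ we have $u\to 0$ on $\partial\Omega_R$; hence $u$ attains its maximum $M$ at some interior point $p\in\Omega_R$, which must lie in $\mathcal{U}_g$ once $R$ is large enough that $\mathcal{U}_g\cap\Omega_R\neq\emptyset$.

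The key curvature identity is
\[
\sqrt{-1}\,\partial\bar\partial\log u \;=\; \sqrt{-1}\,\partial\bar\partial\log k \;+\; \psi_\mathbb{C}^{*}\sqrt{-1}\,\partial\bar\partial\log h \;-\; \psi_\mathbb{C}^{*}\sqrt{-1}\,\partial\bar\partial\log\omega_R,
\]
in which the Dirac contributions of mass $2\pi(m-1)$ carried by $\log g$ and by $\log\psi_\mathbb{C}^{*}\omega_R$ cancel exactly at every ramification point, so the right-hand side is a continuous $(1,1)$--form on $\mathcal{U}_g$. Extracting the smooth part of the current hypothesis~\eqref{curv-orbi-AS} yields $\sqrt{-1}\,\partial\bar\partial\log k+\psi_\mathbb{C}^{*}\sqrt{-1}\,\partial\bar\partial\log h\ge\epsilon g$ on $\mathcal{U}_g$, and combining with the curvature of $\omega_R$ gives
\[
\sqrt{-1}\,\partial\bar\partial\log u \;\ge\; \psi_\mathbb{C}^{*}\omega_R\cdot(\epsilon u - c) \qquad\text{on }\mathcal{U}_g.
\]
If $p$ is not a ramification point then $\psi_\mathbb{C}^{*}\omega_R(p)>0$, and the ordinary maximum principle $\sqrt{-1}\,\partial\bar\partial\log u(p)\le 0$ immediately yields $M=u(p)\le c/\epsilon$.

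The main obstacle is the case when $p$ is a ramification point, where $\psi_\mathbb{C}^{*}\omega_R(p)=0$ renders the pointwise inequality at $p$ vacuous. I would resolve this via the strong maximum principle. Supposing $M>c/\epsilon$, the continuity of $u$ furnishes a connected open neighborhood $V\subset\mathcal{U}_g$ of $p$ on which $\epsilon u-c>0$; hence $\sqrt{-1}\,\partial\bar\partial\log u\ge 0$ on $V$, and the $C^2$ function $\log u$ is subharmonic on $V$ with an interior maximum at $p$. The strong maximum principle then forces $\log u\equiv \log M$ on $V$, so $\sqrt{-1}\,\partial\bar\partial\log u\equiv 0$ on $V$; but this would require $\psi_\mathbb{C}^{*}\omega_R\cdot(\epsilon M-c)\le 0$ on $V$, which is violated at any non-ramification point (such points exist since the ramification set is discrete). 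Thus $M\le c/\epsilon$ in all cases, yielding $g\le (c/\epsilon)\,\psi_\mathbb{C}^{*}\omega_R$ on $\Omega_R$ for every $R$. Since $\omega_R\to 0$ locally uniformly on $\mathbb{C}$ as $R\to\infty$, letting $R\to\infty$ forces $g(p)=0$ for every $p\in\mathbb{C}'$, contradicting $\mathcal{U}_g\neq\emptyset$.
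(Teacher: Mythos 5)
Your argument is correct, but it follows a genuinely different route from the paper's proof of this lemma. The paper argues by Galois descent: after passing to a Galois closure of $\psi_\mathbb{C}$, it averages $g$ over the Galois group, preserves the curvature lower bound via Schumacher's convexity inequality for $dd^c\log(g_1+g_2)$, checks that the resulting invariant pseudometric descends to a bounded conformal pseudometric on $\mathbb{C}$ still satisfying $\sqrt{-1}\partial_t\partial_{\bar t}\log g_\mathbb{C}\ge\varepsilon\, g_\mathbb{C}$, and concludes by the classical Ahlfors--Schwarz lemma on $\mathbb{C}$. You instead run the Ahlfors--Schwarz comparison directly upstairs on $\mathbb{C}'$: compare $g$ with $\psi_\mathbb{C}^*\omega_R$, use that the ratio $u=g/\psi_\mathbb{C}^*\omega_R$ is continuous because the $|z|^{2(m-1)}$ factors at ramification points cancel, apply the maximum principle on the relatively compact $\Omega_R$, and let $R\to\infty$. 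The one genuinely new ingredient you need, and supply, is the strong-maximum-principle treatment of the case when the maximizer $p$ is a ramification point, where $\psi_\mathbb{C}^*\omega_R(p)=0$ renders the pointwise inequality $\sqrt{-1}\partial\bar\partial\log u\ge\psi_\mathbb{C}^*\omega_R\,(\varepsilon u-c)$ vacuous.

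Your route is in fact a corrected version of the one the paper itself uses in the Appendix for the more general Lemma~\ref{ram-AS} together with its corollary. Observe that in the Appendix the chain $0\ge\Delta\log\hat\phi_\varepsilon(w_0)\ge 2c\,\hat g(w_0)-2\hat h_\varepsilon(w_0)$ is applied at the maximizer $w_0$ without addressing the case where $w_0$ is a ramification point: there both $\hat g(w_0)$ and $\hat h_\varepsilon(w_0)$ vanish as $|dz|^2$-coefficients and the final bound gives $0\ge0$, not $\hat\phi_\varepsilon(w_0)\le 1/c$. Your strong-maximum-principle argument (constancy of $\log u$ near $p$ would force $\psi_\mathbb{C}^*\omega_R\,(\varepsilon M-c)\le 0$ on a whole neighbourhood, impossible at unramified points since the ramification locus is discrete) fills exactly that gap. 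So your proof is a valid alternative to the paper's descent argument for this lemma and, at the same time, a useful patch for the Appendix argument; the descent proof avoids the degenerate-maximizer issue altogether at the cost of the Galois machinery and the Schumacher lemma.
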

\begin{proof}
Suppose such a positive constant $\epsilon$ exists.
After replacing $\mathbb{C}'$ we can assume that $\psi_{\mathbb{C}}:\, \mathbb{C}' \to \mathbb{C}$ is a \emph{Galois covering}. Denote by $G:= \mathrm{Aut}(\mathbb{C}'/\mathbb{C})$ the Galois group. We use the same notation $g$ for the pull-back metric on the new cover $\mathbb{C}'$, which is apparently a ramified metric with respect to $\psi_{\mathbb{C}}$.\\
Now we want to construct an \emph{invariant metric} on $\mathbb{C}'$. Since $\psi_{\mathbb{C}}$ is finite, the Galois group $G$ is a finite group. Thus we can make the following averaging
\[
g_{\mathrm{inv}}:= \sum_{\xi \in G} \xi^*g.
\]
Since for two metrics $g_1$ and $g_2$, we have {\cite[Lemma~4]{Sch08}}
\[
dd^c\log (g_1 + g_2) \geq \frac{g_1}{g_1+g_2} dd^c\log g_1 + \frac{g_2}{g_1+g_2} dd^c\log g_2,
\]
we know that the curvature inequality (\ref{curv-orbi-AS}) still holds for the invariant metric $g_{\mathrm{inv}}$.\\
We claim that $g_{\mathrm{inv}}$ can be descended to a bounded metric on $\mathbb{C}$. It is easy to descend the metric near the point where $\psi_{\mathbb{C}}$ is \'etale. So one can localize the problem to the following situation: $\psi_{\mathbb{D}}:\,\mathbb{D} \to \mathbb{D},\,\, z \mapsto t := z^m$ for some $m \in \mathbb{Z}_{>1}$.\\
Note that we have
\[
g_{\mathrm{inv}}=g_{\mathrm{inv}}(z) |\psi^*_{\mathbb{D}}dt|^2.
\]  
From the definition of the ramified metrics we know that $g_{\mathrm{inv}}(z)$ is bounded near the origin.\\ 
In this case, the Galois group action of $\psi_{\mathbb{D}}$ can be written explicitly as following: $\mathrm{Aut}(C'/C) \cong \mathbb{Z}/m\mathbb{Z}$ and the group action is generated by $z \mapsto e^{\frac{2\pi i}{m}}\cdot z$. Then the Galois invariance of $g_{\mathrm{inv}}$ implies that the function $g_{\mathrm{inv}}(z)$ is Galois invariant, thus can be descended to some bounded function $g_{\mathbb{C}}(t)$.\\
Therefore, we conclude that $g_{\mathrm{inv}}$ can be descended to a bounded metric $g_{\mathbb{C}}(t)|dt|^2$ on $\mathbb{C}$. Then near those points of $\mathbb{C}$ where $\psi_{\mathbb{C}}$ is \'etale, the curvature inequality $\sqrt{-1}\,\partial_z\partial_{\bar{z}} \log g_{\mathrm{inv}}(z) \geq \epsilon \cdot g_{\mathrm{inv}}(z)$ descend to $\sqrt{-1}\,\partial_t\partial_{\bar{t}} \log g_{\mathbb{C}}(t) \geq \epsilon \cdot g_{\mathbb{C}}(t)$. Since $g_{\mathbb{C}}(t)$ is bounded, we can extend the curvature inequality in the sense of currents by the continuity on the whole $\mathbb{C}$. This contradicts to the usual Ahlfors-Schwarz lemma {\cite[Lemma~3.2]{Dem97}}.
\end{proof}
In fact, the finiteness condition of $\psi_{\mathbb{C}}$ in Lemma~\ref{orbi-AS} can be removed. To prove this, one need the following ramified version of the Ahlfors-Schwarz lemma:\\[-4mm]

\begin{lem}[Ramified Ahlfors-Schwarz Lemma]\label{ram-AS}
Let $C$ and $C'$ be two Riemannian surfaces and $\psi:\, C' \to C$ a ramified covering map. Let $g$ be a ramified pseudometric on $C'$ with respect to $\psi$ and defined by a conformal factor $k$. Suppose that $g$ has strictly negative curvature on ${\mathcal U}_g$, i.e., that the (Gaussian) curvature $\kappa_g$ of $g$ is bounded from above by a negative constant $-c<0$ there, or equivalently that on ${\mathcal U}_g$, with $g(z)$ as given locally by equation~\ref{local}, we have
\[
\frac {\Delta\,\log g(z)}{2g(z)}=:-\kappa_g \ge c,\ \ \Delta=4\frac{\partial}{\partial z}\frac{\partial}{\partial {\bar{z}}}.
\]
Suppose $C$ has a metric $h$ of constant curvature $-1$, i.e., $C$ is hyperbolic. Then we have
\[
g \le \frac{1}{c} \psi^*h.
\]
\end{lem}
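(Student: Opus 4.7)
The plan is to adapt the classical maximum-principle proof of the Ahlfors--Schwarz lemma to the ramified setting, working with the conformal ratio $u := g/\psi^*h$. By the very definition of a ramified pseudometric, this ratio is exactly the conformal factor $k$ appearing in $g = k\,\psi^*h$, so $u$ is a continuous nonnegative function on $C'$ which is $C^2$ on ${\mathcal U}_g$, and the desired inequality $g \le c^{-1}\psi^*h$ is the same as $u \le 1/c$ on ${\mathcal U}_g$.

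First, I would derive the key differential inequality for $u$. At any point of ${\mathcal U}_g \setminus R_\psi$, both $g$ and $\psi^*h$ are smooth and positive, and a direct local calculation using the curvature hypothesis $\Delta\log g \ge 2cg$, the identity $\Delta\log h = 2h$ coming from $\kappa_h \equiv -1$, the chain rule for $\log h(\tau(z))$, and the harmonicity of $\log|\tau'|$ off the zeros of $\tau'$, yields
\[
\Delta \log u \;\ge\; 2(cu-1)\,\psi^*h \qquad\text{on }{\mathcal U}_g \setminus R_\psi.
\]
The right-hand side is continuous on ${\mathcal U}_g$ and vanishes on $R_\psi$, while the left-hand side is continuous because $k$ is $C^2$ on ${\mathcal U}_g$; hence the inequality extends to all of ${\mathcal U}_g$.

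Second, I would ensure that the supremum of $u$ is actually attained. If $C$ is compact, then $C'$ is compact by properness of $\psi$ and $u$ attains its maximum automatically. In general I would exhaust $C$ by relatively compact hyperbolic subdomains $C_r \uparrow C$, endow each with its complete uniformizing metric $h_r$ of curvature $-1$, and consider $u_r := g/\psi^*h_r$ on $\psi^{-1}(C_r)$. Completeness of $h_r$ forces $h_r \to \infty$ toward $\partial C_r$, and properness of $\psi$ propagates this to $\psi^*h_r \to \infty$ toward $\partial \psi^{-1}(C_r)$; combined with local boundedness of $g$ on the relatively compact set $\psi^{-1}(\overline{C_r})$, this forces $u_r$ to attain its supremum at an interior point. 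By standard monotonicity of the Poincar\'e metric $h_r \searrow h$ as $r$ exhausts $C$, so once $u_r \le 1/c$ is established for every $r$, pointwise passage to the limit yields $u \le 1/c$.

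Finally comes the maximum-principle step: suppose for contradiction $u_r(p) > 1/c$ at an interior maximum $p$. By continuity, $cu_r - 1 > 0$ on a neighborhood $W$ of $p$, and the differential inequality gives $\Delta\log u_r \ge 2(cu_r - 1)\psi^*h_r \ge 0$ on $W$, with strict inequality on $W \setminus R_\psi$ (which is open and nonempty since $R_\psi$ is discrete). Then $\log u_r$ is subharmonic on $W$ with an interior maximum at $p$, so the strong maximum principle forces $\log u_r$ to be constant on the component of $W$ containing $p$, contradicting the strict positivity of $\Delta\log u_r$ off $R_\psi$. The main technical subtlety is exactly this ramification issue: the right-hand side of the differential inequality degenerates on $R_\psi$, but the discreteness of $R_\psi$ combined with the strong maximum principle (rather than a purely pointwise second-derivative test) absorbs the degeneration cleanly.
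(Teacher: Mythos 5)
Your proposal is correct and runs along essentially the same Ahlfors--Schwarz lines as the paper's proof, with two differences worth recording. Cosmetically, instead of lifting to the universal cover $\mathbb D\to C$ and then shrinking the disc as the paper does, you exhaust $C$ directly by relatively compact hyperbolic subdomains $C_r$ equipped with their own Poincar\'e metrics $h_r$, and invoke the monotone convergence $h_r\searrow h$; this is an equally standard device to force the ratio $u_r = g/\psi^*h_r$ to attain an interior maximum and to pass to the limit. More substantively, you replace the pointwise second-derivative test at the maximum by the strong maximum principle for the subharmonic function $\log u_r$. This matters precisely at ramification points: if the interior maximum sits on $R_\psi$, then in the paper's displayed chain $0 \ge 2c\,\hat g(w_0)-2\hat h_\varepsilon(w_0)$ both local densities vanish (and $\log\hat g$, $\log\hat h_\varepsilon$ are not even pointwise finite there since they differ from $\log\hat\phi_\varepsilon$ by $\log|\psi'|^2$), so the pointwise inequality degenerates to $0\ge 0$ and yields no information about the ratio. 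Your argument --- $\Delta\log u_r\ge 2(cu_r-1)\psi^*h_r\ge 0$ near the max, strict off the discrete set $R_\psi$, hence $\log u_r$ is locally constant by the strong maximum principle, contradiction --- absorbs exactly this degeneracy, which is the only place where the ramified setting genuinely requires something beyond the classical proof.
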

Since in our application we only concern about the finite ramified $\psi_{\mathbb{C}}$, we put the proof of Lemma~\ref{ram-AS} in the appendix~\ref{app}.

\section{Compatibility of Deformation Higgs bundles}\label{compatibility}

In this paper we consider the moduli functor of polarized manifolds with a semi-ample canonical sheaf, defined over $\mathbb{C}$. More precisely, let $h \in \mathbb{Q}[T_1,T_2]$ with $h(\mathbb{Z} \times \mathbb{Z}) \subset \mathbb{Z}$ be a polynomial of degree $n$ in $T_1$. The moduli functor $\mathscr{M}_h$ is given by
\[
\mathscr{M}_h(\Spec\, \mathbb{C}) = \left\{ (\Gamma, \mathcal{H}) \bigg\arrowvert \,\, 
\begin{array}{c}
\textrm{ $\Gamma$ is a projective manifold, $\mathcal{H}$ is an ample invertible sheaf over $\Gamma$, $\omega_\Gamma$ is} \\ 
\textrm{semi-ample and $h(\alpha,\beta) = \chi(\mathcal{H}^{\alpha} \otimes \omega^{\beta}_{\Gamma})$ for all $\alpha,\beta \in \mathbb{N}$} 
\end{array}
\right\}\bigg\slash_{\simeq}.
\]
Viehweg proved in {\cite[Theorem~1.13]{vieh95}} the following result:
\begin{thm}
There exists a coarse quasi-projective moduli scheme $\mathcal{M}_h$ for the moduli functor $\mathscr{M}_h$.  
\end{thm}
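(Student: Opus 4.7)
The approach is to realize $\mathcal{M}_h$ as a GIT-type quotient of an appropriate Hilbert scheme. First I would establish \emph{boundedness} of the moduli functor: via a Matsusaka big theorem-type argument, find an integer $\nu_0 = \nu_0(h)$ such that for every $(\Gamma, \mathcal{H}) \in \mathscr{M}_h(\Spec \mathbb{C})$ the sheaf $\mathcal{H}^{\nu_0}$ is very ample with vanishing higher cohomology, giving an embedding $\Gamma \hookrightarrow \mathbb{P}^N$ with $N+1 = h(\nu_0,0)$. Consequently every isomorphism class is represented by a point of the Hilbert scheme $H := \mathrm{Hilb}^{P}(\mathbb{P}^N)$ for the appropriate shifted polynomial $P$, and I would cut out inside $H$ the locally closed subscheme $H_0$ whose geometric points parametrize those $\Gamma \subset \mathbb{P}^N$ that are smooth, have semi-ample canonical sheaf, and are embedded by the complete linear system $|\mathcal{H}^{\nu_0}|$; these conditions are open by standard semicontinuity together with Matsusaka--Mumford rigidity.

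The group $G := \mathrm{PGL}(N+1)$ acts on $H_0$, and set-theoretically $\mathcal{M}_h = H_0/G$. To endow this quotient with a quasi-projective scheme structure, I would construct a $G$-linearised ample line bundle on $H_0$ by pushing forward powers of the relative dualising and polarisation sheaves. For the universal family $f \colon \mathcal{X} \to H_0$ and suitable $\mu, p \gg 0$, set
\[
\mathcal{E} \;:=\; f_*\bigl(\omega_{\mathcal{X}/H_0}^{\mu} \otimes \mathcal{L}^{p}\bigr), \qquad \lambda \;:=\; \det \mathcal{E}.
\]
Viehweg's \emph{weak positivity} theorem for direct images in families with semi-ample relative canonical sheaf guarantees that $\mathcal{E}$, hence $\lambda$, is weakly positive; after tensoring with Pl\"ucker-type line bundles coming from the natural embedding of $H_0$ into a Grassmannian of quotients of $H^0(\mathbb{P}^N,\mathcal{O}(\nu_0))$, one obtains a $G$-equivariant ample line bundle whose sections separate $G$-orbits.

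The hard part is precisely this orbit-separation-plus-ampleness step: weak positivity alone only yields nef-type statements, and upgrading it to the strict ampleness needed for a GIT quotient is where Viehweg's full ampleness criterion enters, alongside a careful analysis of the boundary of $H_0$ inside $H$. Here the hypothesis that $\omega_\Gamma$ is semi-ample (rather than merely arbitrary) is essential: it singles out the class of families on which Viehweg's positivity machinery applies uniformly in the moduli. Once such an ample line bundle is in hand, Koll\'ar's quotient theorem (or, equivalently, Mumford's GIT applied to a suitable projective compactification on which all relevant points are semistable) produces $\mathcal{M}_h$ as a quasi-projective scheme, and the coarse universal property is verified by checking that every natural transformation from $\mathscr{M}_h$ to a representable functor factors through the induced morphism $H_0 \to \mathcal{M}_h$.
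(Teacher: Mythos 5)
The paper does not prove this theorem; it is cited as Theorem~1.13 of Viehweg's book \cite{vieh95}, and the remark immediately following the statement records the key feature of Viehweg's construction. Your outline captures the overall architecture of that construction correctly: boundedness via a Matsusaka-type theorem, a Hilbert scheme with $\mathrm{PGL}$-action, ampleness of a determinant-of-pushforward line bundle obtained from weak positivity of direct images, and a geometric quotient via Koll\'ar's theorem or GIT. However, it omits the ``double polarization'' device which the paper's own remark singles out as the crux of the matter. Viehweg does \emph{not} embed $\Gamma$ via $\mathcal{H}^{\nu_0}$ alone, but rather via the two sheaves $\mathcal{H}^{\nu} \otimes \omega_{\Gamma}^{e}$ and $\mathcal{H}^{\nu+1} \otimes \omega_{\Gamma}^{e'}$; this is exactly why the Hilbert polynomial $h$ is taken in two variables $(T_1,T_2)$ tracking both $\mathcal{H}$ and $\omega_\Gamma$. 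The point of this is to make the moduli functor $\mathscr{M}_h$ separated, which in turn guarantees that the group action on the Hilbert scheme is proper, so that the geometric quotient exists. Your single-polarization Hilbert scheme records $\mathcal{H}$ but not $\omega_\Gamma$, and in the polarized-but-not-canonically-polarized setting the resulting functor need not be separated; the geometric quotient step would then fail.

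A secondary issue: you invoke Matsusaka--Mumford rigidity to justify the ``openness'' of the conditions cutting out $H_0$, but Matsusaka--Mumford is a uniqueness/separatedness statement over a discrete valuation ring, not an openness statement. Moreover, semi-ampleness of $\omega_\Gamma$ is not known to be an open (or even constructible) condition in a flat family; establishing local closedness of the relevant locus in the Hilbert scheme is a genuine technical hypothesis in Viehweg's framework that requires a separate argument and cannot be waved through with semicontinuity.
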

This means, for any family $(f:\, V \to U, \mathcal{L}) \in \mathscr{M}_h(U)$, we have a classifying morphism $\varphi:\, U \to \mathcal{M}_h$.
\begin{rmk}
In the construction of the coarse moduli scheme $\mathcal{M}_h$, instead of using the usual polarization $\mathcal{H}^{\nu}$ of $\Gamma$, Viehweg uses two polarizations $\mathcal{H}^{\nu} \otimes \omega^e_{\Gamma}$ and $\mathcal{H}^{\nu+1} \otimes \omega^{e'}_{\Gamma}$ for suitable $e,e' \in \mathbb{N}$ (that's why we use the modified Hilbert polynomial $h$ with two variables). The purpose of this ``double polarization'' is to make the moduli functor $\mathscr{M}_h$ \emph{separated} (cf. {\cite[Definition~1.5]{vieh95}}). The coarse moduli scheme is constructed by taking the geometric quotient of certain Hilbert scheme.
The separatedness guarantees that the group action is proper. See {\cite[\S7]{vieh95}} for a discussion.    
\end{rmk}

Now let $(f:\,V \to U,\mathcal{L})$ be a smooth family of polarized manifolds with fixed Hilbert polynomial $h$, that is, an element in $\mathscr{M}_h(U)$. As we explained above, the family induces a classifying map $\varphi:\, U \to \mathcal{M}_h$. When the family has maximal variation, i.e. $\varphi$ is generically finite, one can show the Brody hyperbolicity of $U$ via the Viehweg-Zuo construction \cite{VZ-1}.\\ 
However, in the formulation of the relative isotriviality conjecture, the genral fiber of $\varphi$ could be positive dimensional subscheme, and what we try to prove is some sort of hyperbolicity of the image $\varphi(U)$. 
So the strategy is to apply the Viehweg-Zuo construction ``on the image $\varphi(U)$'', which will be made precise in the following.\\

Note that there is no universal family on $\mathcal{M}_h$. So one cannot apply Viehweg-Zuo's argument directly on the image. 
However, one can find some ramified covering of $\mathcal{M}_h$ which carries a universal family by the works of Koll\'ar, Viehweg et al. 
So we can find $U^{\#}$ finite over $\varphi(U)$, which carries a maximally varied family and fits into the diagram
\[
\xymatrix{
\mathbb{C}' \ar[r] \ar[d] & U' \ar[r] \ar[d] & U^{\#} \ar[d] \\
\mathbb{C} \ar[r]^{\gamma} & U \ar[r] & \varphi(U)
}
\]
where $\gamma:\, \mathbb{C} \to U$ is the entire curve we are concerned about and both $U' \to U$ and $\mathbb{C}' \to \mathbb{C}$ are the pull back of $U^{\#} \to \varphi(U)$. Then we want to use the Viehweg-Zuo construction on $U^{\#}$ as well as Lemma~\ref{orbi-AS} to conclude that the composition map $\mathbb{C}' \to U^{\#}$ is constant.\\

The problem is that $U'$ and $U^{\#}$ could be quite singular since $\mathcal{M}_h$ is highly singular in general. 
So next we shall recall a technical lemma of \cite{VZ}, which gives good birational models for those spaces. We first recall some terminology.
\begin{defi}
Given a family $V \to U$ we will call $\hat{V} \to \hat{U}$ a \emph{birational model} of it if there exists compatible birational morphisms $\delta:\, \hat{U} \to U$ and $\delta':\, \hat{V} \to V \times_{U} \hat{U}$.  
\end{defi}
\begin{defi}%[{\cite[Definition~2.1]{VZ}}]
Let $V \to U$ be a smooth projective morphism between quasi-projective manifolds.
\begin{itemize}
 %If $V \to U$ is smooth, we call $\hat{V} \to \hat{U}$ a \emph{smooth birational model}, if $\delta'$ is an isomorphism.
\item[a)] We call $f:\, X \to Y$ a \emph{partial compactification} of $V \to U$, if
  \begin{itemize}
  \item[i)] $X$ and $Y$ are quasi-projective manifolds, and $U \subset Y$.
  \item[ii)] $Y$ has a non-singular projective compactification $\bar{Y}$ such that $S:= Y \setminus U$ extends to a normal crossing divisor and such that $\mathrm{codim}(\bar{Y} \setminus Y) \geq 2$.
  \item[iii)] $f$ is a projective morphism and $f^{-1}(U) \to U$ coincides with $V \to U$.
  \item[iv)] $S:= Y \setminus U$ and $\Delta:=f^*S$ are normal crossing divisors.
\end{itemize}
\item[b)] We say that a partial compactification $f:\, X \to Y$ is \emph{good} if the condition iv) in a) is replaced by
  \begin{itemize}
  \item[iv)'] $f$ is flat, $S:= Y \setminus U$ is a smooth divisor and $\Delta:=f^*S$ is a relative normal crossing divisor, i.e. a normal crossing divisor whose components and all their intersections are smooth over components of $S$. 
  \end{itemize}
\item[c)] The good partial compactification $f:\, X \to Y$ is said to be \emph{semi-stable} if in b), iv)', the divisor $f^*S$ is reduced.
\item[d)] An arbitrary partial compactification of $V \to U$ is called semi-stable in codimension one, if it contains a semi-stable good partial compactification.
\end{itemize}
\end{defi}

\begin{defi}
A projective morphism $g:\, Z \to Y$ between quasi-projective varieties is called \emph{mild}, if
\begin{itemize}
\item[a)] $g$ is flat, Gorenstein with reduced fibers.
\item[b)] $Y$ is non-singular and $Z$ normal with at most rational singularities.
\item[c)] Given a dominant morphism $Y_1 \to Y$ where $Y_1$ has at most rational Gorenstein singularities, $Z \times_Y Y_1$ is normal with at most rational singularities.
\item[d)] Let $Y_0$ be an open subvariety of $Y$, with $g^{-1}(Y_0) \to Y_0$ smooth. Given a non-singular curve $C$ and a morphism $C \to Y$ whose image meets $Y_0$, the fiber product $Z \times_Y C$ is normal, Gorenstein with at most rational singularities. 
\end{itemize}
\end{defi}
The following is our key technical lemma
\begin{lem}[Lemma~2.8 in \cite{VZ}]\label{diagram}
Replacing the original family by some birational model, which we still denote as $V \to U$, one can find a good partial compactification $f:\, X \to Y$ of it, such that the partially compactified family $X \to Y$ fits into the following commutative diagram
\begin{align}
  \label{eq:2}
\xymatrix{
X \ar[d]^f  & X' \ar[d]^{f'} \ar[l]_{\psi'} & Z \ar[ld]^-g \ar[l]_{\sigma} \ar[r]^{\eta'} & Z^{\#} \ar[ld]^-{g^{\#}}  \\
Y & Y' \ar[l]_{\psi} \ar[r]^{\eta} & Y^{\#} &
}
\end{align}
with:
\begin{itemize}
\item[a)] $g^{\#}$ is a projective morphism, birational to a mild projective morphism ${g^{\#}}':\, {Z^{\#}} \to Y^{\#}$;
\item[b)] $g^{\#}$ is semi-stable in codimension one;
\item[c)] both $Y'$ and $Y^{\#}$ are smooth projective, $\eta$ is dominant and smooth in codimension one, $\eta'$ factors through a birational morphism $Z \to Z^{\#} \times_{Y^{\#}} Y'$, and $\psi$ is finite;
\item[d)] $X'$ is the normalization of $X \times_Y Y'$ and $\sigma$ is a blowing up with center in ${f'}^{-1}\psi^{-1}(S')$; in particular $f'$ and $g$ are projective;
\item[e)] let $U^{\#}$ be the largest subscheme of $Y^{\#}$ with
\[
V^{\#}:= {g^{\#}}^{-1}(U^{\#}) \to U^{\#}
\]
smooth; then $\psi^{-1}(U) \subset \eta^{-1}(U^{\#})$ and $U^{\#}$ is generically finite over $\mathcal{M}_h$.
\end{itemize}
\end{lem}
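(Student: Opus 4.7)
The plan is to combine three main ingredients: Kollár's theorem on the existence of a universal family over a finite cover of the moduli space, Abramovich-Karu-style weak semistable reduction to produce a mild and codimension-one semistable model, and a fibre-product/blow-up construction to reconcile both sides of the diagram through a common base $Y'$. This is essentially the strategy of \cite[Lemma 2.8]{VZ}.

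First, since $\mathcal{M}_h$ is only a coarse moduli space, I would invoke Kollár's result (or, equivalently, a level-structure argument in the spirit of Popp/Viehweg) to obtain a generically finite cover of $\mathcal{M}_h$ carrying a universal family. Pulling back the classifying morphism $\varphi$ along this cover yields a quasi-projective variety $U^\#$ that is generically finite over $\varphi(U)$ and supports a maximally-varying family $V^\# \to U^\#$. This immediately takes care of the moduli part of~(e).

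Next, I would compactify $U^\#$ to a smooth projective $\bar Y^\#$ with simple normal crossing boundary and run weak semistable reduction on the extended family; after further birational modifications one obtains a projective morphism $g^\#: Z^\# \to Y^\#$ that is birational to a mild morphism in Kollár's sense, giving~(a), and semistable in codimension one, giving~(b), with $Y^\#$ smooth projective. After replacing the original family $V \to U$ by a suitable birational model, I would take a smooth projective compactification $\bar Y$ of $U$ with normal crossing boundary, and let $Y'$ be a desingularization of the dominant irreducible component of $\bar Y \times_{\overline{\varphi(U)}} Y^\#$. Then $\psi: Y' \to \bar Y$ is finite and $\eta: Y' \to Y^\#$ is dominant, and smoothness of $\eta$ in codimension one is automatic because $Y^\#$ is generically étale over $\varphi(U)$ and $Y'$ is smooth. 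A good partial compactification $f: X \to Y$ is then obtained by restricting $\bar Y$ to a smooth open $Y \supset U$ whose boundary $S$ is smooth and for which $f^{*}S$ is relative normal crossing, possibly after additional blow-ups over $\bar Y \setminus U$ and deletion of loci of codimension $\geq 2$ as permitted by the definition.

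Finally, to close the diagram, set $X'$ to be the normalization of $X \times_Y Y'$ and take $\sigma: Z \to X'$ to be a resolution obtained by blowing up centres supported over $(f')^{-1}\psi^{-1}(S')$. By the universality secured in the first step, over the dense open set $\psi^{-1}(U) \cap \eta^{-1}(U^\#)$ the family $X'$ is canonically identified with $Z^\# \times_{Y^\#} Y'$; this identification propagates to a morphism $\eta': Z \to Z^\#$ factoring through a birational morphism $Z \to Z^\# \times_{Y^\#} Y'$, thereby closing~(c) and~(d). The main obstacle is the second step: the semistable reduction over $Y^\#$ must be arranged so that $g^\#$ is mild in Kollár's strong sense — in particular stable under base change by rational Gorenstein varieties — since this is precisely what ensures that $Z^\# \times_{Y^\#} Y'$ inherits the good singularity properties needed downstream when $Y'$ is obtained only up to desingularization. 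This compatibility between the two semistable reductions — one producing $f:X\to Y$ and one producing $g^\#:Z^\#\to Y^\#$ — through the single finite cover $\psi$ is the technical heart of the construction.
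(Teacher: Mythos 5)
Your overall architecture is the right one and largely matches the paper's: Kollár's theorem produces a generically finite cover of $\mathcal{M}_h$ carrying a universal family, $U^\#$ is obtained by pulling that back over $\varphi(U)$, $Y'$ is a desingularized fiber product sitting over both $Y$ and $Y^\#$, and the mild/semistable-in-codimension-one properties of $g^\#$ are produced by a semistable-reduction step. However, there is a genuine gap in your treatment of condition~(c), specifically the finiteness of $\psi$.

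You assert that after taking $Y'$ to be a desingularization of the dominant component of $\bar Y \times_{\overline{\varphi(U)}} Y^{\#}$, the morphism $\psi:Y'\to\bar Y$ is finite. This is not true: the fiber product itself has a finite projection to $\bar Y$ (since $Y^\#\to\overline{\varphi(U)}$ is finite), but once you desingularize, exceptional divisors appear and $\psi$ becomes only a projective, generically finite morphism, i.e.\ an \emph{alteration}, not a finite map. The paper handles this explicitly with a birational re-basing trick: one may assume $Y'\to Y$ is a Galois alteration with group $G$, takes its Stein factorization, and replaces $Y$ by $\hat Y:=\Spec_Y(\psi_*\mathcal{O}_{Y'})^G$, which is birational to $Y$ (same function field) and over which $Y'$ becomes genuinely finite. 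This device is not cosmetic — it is reused each time $Y'$ or $Y^\#$ is replaced by a new birational model later in the construction, so that finiteness of $\psi$ is preserved throughout. Without it, condition~(c) fails.

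A secondary issue: you claim smoothness of $\eta$ in codimension one is ``automatic'' because $Y^\#$ is generically étale over $\varphi(U)$ and $Y'$ is smooth. It is not automatic: generic étaleness gives smoothness over a dense open set, but not over a codimension-one open set. The paper obtains this only after further work — a Stein factorization to make $\eta$ have connected fibers, a flattening blow-up of $Y^\#$ so that $\eta$ becomes flat, and then an application of the semistable-reduction lemma \cite[Lemma~2.3~b)]{VZ} to $\eta$ itself to make it semistable (hence smooth) in codimension one. Your proposal also invokes Abramovich--Karu weak semistable reduction where the paper uses \cite[Lemmas~2.3,~2.6]{VZ}; these are in the same spirit and either can be made to work, but you should be aware the paper's choice is tailored to produce precisely the ``mild'' property in Kollár's sense needed for base-change stability, which you correctly flag as the technical crux.
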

\begin{rmk}
Note that Lemma~2.8 in \cite{VZ} requires the original family to have the canonical polarization. In fact their argument also works for our situation without much modification. We outline the proof briefly here.   
\end{rmk}
\begin{proof}[Sketch of the proof]
By {\cite[Lemma~7.6]{vieh95}} we know that the moduli functor $\mathscr{M}_h$ has reduced finite automorphism ( this is also true for the moduli functor of canonically polarized manifolds, and this is the only place where \cite[Lemma~2.8]{VZ} uses the condition about canonical polarization ). That means we can use the theorem of Koll\'ar (cf. \cite{kol90} or {\cite[Theorem~9.25]{vieh95}}) to find a reduced normal scheme $Z$, a finite group $\Gamma$ acting on $Z$ and a family $(g:\, X \to Z, \mathcal{L}) \in \mathscr{M}_h(Z)$ such that the normalization of $(\mathcal{M}_h)_{\mathrm{red}}$ is isomorphic to the quotient $Z/\Gamma$, and the composition $Z \to Z/\Gamma \to \mathcal{M}_h$ is exactly the classifying map induced by the family $g$, which is descended from the universal family over the Hilbert scheme.\\
So we can take $U^{\#}:= Z \times_{\mathcal{M}_h} \varphi(U)$ and $V^{\#} \to U^{\#}$ is the induced family. Let $U'$ be the desingularization of the normalization of $U^{\#} \times_{\varphi(U)} U$. Then we get the following commutative diagram
\[
\xymatrix{
U' \ar[r]^{\eta} \ar[d]^{\psi} & U^{\#} \ar[d] \\
U \ar[r]^{\varphi} & \mathcal{M}_h.
}
\] 
Now we denote by $V' \to U'$ the pull back of the family $V \to U$ along $\psi$. Note that the classifying map of $V' \to U'$ factors through $U^{\#} \to \mathcal{M}_h$ by the commutative diagram above. Then the property of the universal family guarantees that $V' \cong V^{\#} \times_{U^{\#}} U'$.\\
Note that we can desingularize both $U'$ and $U^{\#}$ to keep the diagram commutative. After projective compactification of the families $f:\, X \to Y$, $g:\, X' \to Y'$ and $g^{\#}:\, Z^{\#} \to Y^{\#}$, we have the following diagram
\begin{align}\label{three-Y}
\xymatrix{
Y' \ar[r]^{\eta} \ar[d]^{\psi} & Y^{\#} \\
Y & 
}
\end{align}
of smooth projective varieties.
However, $\psi:\, Y' \to Y$ is no longer a finite morphism. It is projective and generically finite, that is, an \emph{alteration}. Here is a trick to make $\psi$ finite after replacing $Y$ by some birational model. First we may assume $Y' \to Y$ is a Galois alteration. Then consider the Stein factorization $Y' \to \tilde{Y}' \to Y$. Let $G$ be the Galois group of the alteration $Y' \to Y$. Next we define $\hat{Y}:= \Spec_{Y} (\psi_*\mathcal{O}_{Y'})^G$. Then $\hat{Y} \to Y$ is birational since they have the same function field. By functoriality we have
\[
\xymatrix@dr{
Y' \ar[r] \ar@{.>}[d]  & \tilde{Y}' \ar[d] \\
\hat{Y} \ar[r]  & Y
}
\]
where $Y' \to \hat{Y}$ is finite. In the following construction we shall change the birational model of $Y'$ and $Y^{\#}$ several times, and this trick can help us to transport the birational modification to $Y$ and preserve the finiteness of $\psi$.\\
Now we can apply {\cite[Lemma~2.3, b) and Lemma~2.6]{VZ}} to the family $g^{\#}:\,Z^{\#} \to Y^{\#}$ and obtain a mild model of it which is semi-stable in codimension one as required in a) and b) after replacing $Y^{\#}$ by some alteration. 
The new family which we still denote as $g^{\#}:\,Z^{\#} \to Y^{\#}$ has maximal variation since the classifying map from the smooth locus $U^{\#}$ to $\mathcal{M}_h$ is a composition of generically finite maps. Replacing $Y'$ by the desingularization of the fiber product will preserve the diagram (\ref{three-Y}) above ( of course the birational model of $Y$ is changed ).\\ 
Now we use the Stein factorization and replace $Y^{\#}$ by some finite cover. Then we can assume that $\eta$ has connected fibers. Next choose a blowing up $\hat{Y}^{\#} \to Y^{\#}$ such that the main component of $Y' \times_{Y^{\#}}\hat{Y}^{\#}$ is flat over $\hat{Y}^{\#}$. Hence desingularizing and replacing the notations again, we can assume that $\eta$ is flat and generically smooth since $Y'$ is nonsingular. Then we can apply {\cite[Lemma~2.3, b)]{VZ}} to $\eta:\,Y' \to Y^{\#}$ and obtain that $\eta$ is semi-stable in codimension one. As a consequence, $\eta$ is smooth in codimension one.\\
The check of the rest statements is routine, and details could be found in {\cite[proof of Lemma~2.8]{VZ}}.
\end{proof}

Thus we have three families of polarized manifolds $f:\,(X,\Delta) \to (Y,S)$, $g:\,(Z,\Delta') \to (Y',S')$ and $g^{\#}:\,(Z^{\#},\Delta^{\#}) \to (Y^{\#},S^{\#})$ in hand, whose geometry are closely related by Lemma~\ref{diagram}. \\[-3mm]

In the rest of this section, we apply this technical construction to show the \emph{compatibility of deformation Higgs bundles} associated to these families. This is crucial for our proof of the relative isotriviality conjecture. We first recall the definition of a deformation Higgs bundle.\\[-3mm]

As in \cite{VZ-1} and \cite{VZ}, we shall use the tautological short exact sequences
\begin{equation}\label{tautol-seq}
0 \to f^*\Omega^1_Y(\log S) \otimes \Omega^{p-1}_{X/Y}(\log \Delta) \to \mathfrak{gr}(\Omega^p_X(\log \Delta)) \to \Omega^p_{X/Y}(\log \Delta) \to 0
\end{equation}
where
\[
\mathfrak{gr}(\Omega^p_X(\log \Delta)) := \Omega^p_X(\log \Delta) / f^*\Omega^2_Y(\log S) \otimes \Omega^{p-2}_{X/Y}(\log \Delta).
\]
Note that the short exact sequence can be established only when $f:\,(X,\Delta) \to (Y,S)$ is log smooth. Denote by $\mathcal{L}=\Omega^n_{X/Y}(\log \Delta)$. We define
\[
F^{p,q}:= R^qf_*(\Omega^p_{X/Y}(\log \Delta) \otimes \mathcal{L}^{-1})/\mathrm{torsion}
\]
together with the edge morphisms
\[
\tau^{p,q}:\, F^{p,q} \to F^{p-1,q+1} \otimes \Omega^1_Y(\log S)
\]
induced by the exact sequence (\ref{tautol-seq}) tensored with $\mathcal{L}^{-1}$.  
\begin{rmk}
It is easy to see that $\tau^{n,0}|_U$ is nothing but the Kodaira-Spencer map of the family. So the Higgs maps $\tau^{p,q}$ can be regarded as the \emph{generalized Kodaira-Spencer maps}.  
\end{rmk}
So we get the Higgs sheaf $(F,\tau)$ defined on $Y$.
We define $(F',\tau')$ over $Y'$ and  $(F^{\#},\tau^{\#})$ over $Y^{\#}$ in the same manner.\\

By the smoothness of $\eta$ and the functoriality of the construction of $(F,\tau)$ under the pull back of families, we have the isomorphism
\[
\eta^*(F^{\#},\tau^{\#}) \cong (F',\tau')
\]
over a large open subset. More precisely, we have the following commutative diagram
\begin{align}\label{comm-1}
\xymatrix{
\eta^*(F^{\#})^{p,q} \ar[rr]^-{\eta^*(\tau^{\#})^{p,q}} \ar[d]^{\cong} & & \eta^*(F^{\#})^{p-1,q+1}  \otimes \eta^* \Omega^1_{Y^{\#}}(\log S^{\#}) \ar[d]^{\cong \otimes d\eta} \\
(F')^{p,q} \ar[rr]^-{(\tau')^{p,q}} & & (F')^{p-1,q+1} \otimes \Omega^1_{Y'}(\log S').
}
\end{align}
To show this one only needs to notice that over the large open subset (i.e. the complement has codimension $\geq$ 2 ) of $Y'$ where $\eta$ is smooth, we have $Z = Z^{\#} \times_{Y^{\#}} Y'$ and thus $\Omega^p_{Z/Y'}(\log \Delta') \cong \eta'^*\Omega^p_{Z^{\#}/Y^{\#}}(\log \Delta^{\#})$. Then commutative diagram (\ref{comm-1}) follows from the flat base change theorem.\\

Next we study the relation between the deformation Higgs bundles $(F,\tau)$ over $Y$ and $(F',\tau')$ over $Y'$. From {\cite[Lemma~4.1]{VZ}} we have
\begin{lem}\label{zeta}
Keep the notations as above. Let $Y'_1$ be the largest open subset in $Y'$ with $X \times_Y Y'_1$ normal. For all $p$ and $q$ we have morphisms
\[
\zeta_{p,q}:\, F^{p,q} \to (F')^{p,q}
\]
whose restriction to $Y'_1$ are isomorphisms, and the following diagram
\begin{align}\label{comm-2}
\xymatrix{
\psi^*F^{p,q} \ar[r]^-{\psi^*\tau^{p,q}} \ar[d]^{\zeta_{p,q}} &  \psi^*F^{p-1,q+1} \otimes \psi^*\Omega^1_Y(\log S) \ar[d]^{\zeta_{p-1,q+1} \otimes d\psi} \\
(F')^{p,q} \ar[r]^-{(\tau')^{p,q}} & (F')^{p-1,q+1} \otimes \Omega^1_{Y'}(\log S')
}
\end{align}
commutes.
\end{lem}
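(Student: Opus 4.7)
The plan is to construct $\zeta_{p,q}$ as the composition of a natural pullback map on log differentials with a base change morphism, then show it is an isomorphism over $Y'_1$ by flat base change, and finally derive the commutativity from the functoriality of the tautological sequence.

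First I would use the morphism of log pairs $(Z,\Delta')\to (X,\Delta)$ induced by the composition $Z \xrightarrow{\sigma} X' \xrightarrow{\psi'} X$. Since both $X$ and $Z$ are smooth and the boundary $\Delta'$ contains the scheme-theoretic preimage of $\Delta$, functoriality of log Kähler differentials gives
\[
(\psi'\sigma)^*\Omega^p_X(\log \Delta)\longrightarrow \Omega^p_Z(\log \Delta').
\]
Quotienting by the image of $(\psi'\sigma)^*f^*\Omega^\bullet_Y(\log S)$ and using that $g=\psi\circ f'\circ \sigma= f\circ \psi'\circ\sigma$ identifies the two bases accordingly, one descends to a map between relative log differentials $(\psi'\sigma)^*\Omega^p_{X/Y}(\log \Delta)\to \Omega^p_{Z/Y'}(\log \Delta')$. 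In top degree this yields, after dualization, a natural morphism $(\psi'\sigma)^*\mathcal{L}^{-1}\to (\mathcal{L}')^{-1}$. Tensoring the two maps, applying $R^q g_*$, and composing with the base change morphism $\psi^*R^qf_*\to R^qg_*(\psi'\sigma)^*$ produces a morphism of sheaves on $Y'$ which, after passing to the torsion-free quotients that define $F^{p,q}$ and $(F')^{p,q}$, gives the sought $\zeta_{p,q}$.

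Next I would verify that $\zeta_{p,q}|_{Y'_1}$ is an isomorphism. Over $Y'_1$ the fibre product $X\times_Y Y'_1$ is normal by definition, hence the normalization map $X'\to X\times_Y Y'$ is an isomorphism there. The blow-up $\sigma:Z\to X'$ has center in $f'^{-1}\psi^{-1}(S')$ and satisfies $\sigma_*\mathcal{O}_Z=\mathcal{O}_{X'}$ and $R^{>0}\sigma_*\mathcal{O}_Z=0$, so by the projection formula the push-forwards of the log differentials agree on $X'|_{Y'_1}$. Standard flat base change along $\psi|_{Y'_1}$, together with the local freeness of $F^{p,q}$ on the smooth locus proved in \cite{VZ-1}, then produces the canonical isomorphism $\psi^*R^qf_*(\Omega^p_{X/Y}(\log\Delta)\otimes\mathcal{L}^{-1})\cong R^qg_*(\Omega^p_{Z/Y'}(\log\Delta')\otimes(\mathcal{L}')^{-1})$ on $Y'_1$, and by unravelling the construction this canonical isomorphism is exactly $\zeta_{p,q}|_{Y'_1}$.

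The commutativity of diagram (\ref{comm-2}) then follows from the functoriality of the tautological short exact sequence (\ref{tautol-seq}) under the morphism of families: pulling (\ref{tautol-seq}) back from $X$ to $Z$ along $\psi'\sigma$ yields a morphism of short exact sequences into the analogous sequence for $g$, and since $\tau^{p,q}$ and $(\tau')^{p,q}$ are the connecting homomorphisms associated to these sequences (tensored with $\mathcal{L}^{-1}$ and $(\mathcal{L}')^{-1}$ respectively), they automatically intertwine with $\zeta_{p,q}$, with the factor $d\psi:\psi^*\Omega^1_Y(\log S)\to\Omega^1_{Y'}(\log S')$ on the right-hand side entering precisely as the induced map on the base-direction piece $f^*\Omega^1_Y(\log S)$.

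I expect the main obstacle to be the careful bookkeeping away from $Y'_1$: the normalization step and the blow-up $\sigma$ introduce exceptional contributions that are killed only after passing to torsion-free quotients, and verifying that the constructed morphism descends cleanly to these quotients — in particular, that the top-form comparison $(\psi'\sigma)^*\mathcal{L}^{-1}\to (\mathcal{L}')^{-1}$ is compatible with the blow-up discrepancies along the exceptional divisor — is the delicate point that forces the statement to restrict to $Y'_1$ for the isomorphism claim.
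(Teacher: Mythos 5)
Your overall strategy — pull back the log differentials along the morphism of families, use flat base change to compare direct images, and read off the commutativity of \eqref{comm-2} from the functoriality of the tautological sequence \eqref{tautol-seq} — is the same as the paper's, but you are missing the one technical device that actually makes it work, namely the enlargement of the boundary divisors. The paper (following \cite[Lemma~3.8]{VZ}) sets $S^{\dagger}:=S+\Delta(Y'/Y)$ and $\Delta^{\dagger}:=\Delta+\Delta(Z/X)$ (the ramification/discriminant loci of $\psi$ and $\psi'$), arranges these to be normal crossing, and then uses the key observation that $\Delta(Z/X)=f^{-1}\Delta(Y'/Y)$, which forces
\[
\Omega^1_{X/Y}(\log\Delta)=\Omega^1_{X/Y}(\log\Delta^{\dagger}).
\]
Only with this enlarged boundary do the (generalized) Hurwitz formulas become equalities, giving $\psi^*\Omega^1_Y(\log S^{\dagger})=\Omega^1_{Y'}(\log\psi^*S^{\dagger})$ and $\psi'^*\Omega^1_X(\log\Delta^{\dagger})\cong\Omega^1_Z(\log\psi'^*\Delta^{\dagger})$ over the finite locus, from which the \emph{isomorphism} $\psi'^*\Omega^p_{X/Y}(\log\Delta)\cong\Omega^p_{Z/Y'}(\log\Delta')$ outside the exceptional locus of the desingularization follows. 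Your proof never introduces $\Delta^{\dagger}$, so the pullback map you construct on relative log forms has no reason to be an isomorphism on $Y'_1$ beyond the interior $U'$.

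This gap propagates into two concrete problems in your argument. First, the ``dualization'' step: your map on determinants is $(\psi'\sigma)^*\mathcal{L}\to\mathcal{L}'$, and dualizing a morphism of line bundles reverses the arrow, giving $(\mathcal{L}')^{-1}\to(\psi'\sigma)^*\mathcal{L}^{-1}$ — the \emph{wrong} direction for building $\zeta_{p,q}:\psi^*F^{p,q}\to(F')^{p,q}$. This is not a fatal issue once one knows the map on log forms is an isomorphism on a large open set (one then inverts there and extends to the torsion-free quotients $F$, $F'$), but that isomorphism is exactly what the $\dagger$-enlargement provides and what you did not establish. Second, your treatment of the blow-up $\sigma$ is not correct as stated: $\sigma_*\mathcal{O}_Z=\mathcal{O}_{X'}$ and $R^{>0}\sigma_*\mathcal{O}_Z=0$ combined with the projection formula control $R^q\sigma_*(\sigma^*E)$ for locally free $E$ on $X'$, but $\Omega^p_{Z/Y'}(\log\Delta')$ is \emph{not} of the form $\sigma^*E$ — log differentials change across the exceptional divisor — so this does not by itself show the push-forwards agree over $Y'_1$. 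The paper sidesteps this by identifying $\psi'^*(\Omega^p_{X/Y}(\log\Delta^{\dagger})\otimes\mathcal{L}^{-1})$ with $\Omega^p_{Z/Y'}(\log\Delta')\otimes\mathcal{L}'^{-1}$ only away from the exceptional locus and then passing to the torsion-free quotient; you need some version of this, and again the $\dagger$-boundary is what makes the identification hold on the open set where the normalization $\tilde\varphi$ is an isomorphism (in particular over $Y'_1$), not merely over $U'$.

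In short: the skeleton of your argument (pullback of log forms, flat base change, functoriality of \eqref{tautol-seq} for the commutativity) is the paper's, but the proof as written would fail on $Y'_1\cap S'$ and at the determinant step. Adding the $\dagger$-enlargement of $S$ and $\Delta$ by the discriminant loci, together with the identity $\Delta(Z/X)=f^{-1}\Delta(Y'/Y)$, is the missing ingredient that closes both gaps.
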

\begin{proof}[Sketch of the proof in \cite{VZ}]
Recall the construction of the family $g:\, Z \to Y'$ from the diagram
\begin{align}\label{family-g}
\xymatrix{
Z \ar[r]^-{\varphi} \ar[rd]^-{g} & \tilde{X} \ar[r]^-{\tilde{\varphi}} \ar[d]^-{\tilde{f}} & X \times_Y Y' \ar[r]^-{\pi_1} \ar[ld]^-{\pi_2} & X \ar[ld]^-f \\
  & Y' \ar[r]^{\psi} & Y &
}
\end{align}
where $\tilde{\varphi}$ is the normalization and $\varphi$ is the desingularization. Denote by $\psi':=\pi_1 \circ \tilde{\varphi} \circ \varphi :\, Z \to X$ the composition.
The discriminant loci of $\psi$ and $\psi'$ will be $\Delta(Y'/Y)$ and $\Delta(Z/X)$ respectively. After changing the birational models of $Y$ and $X$ we can assume that $S^{\dagger}:= S + \Delta(Y'/Y)$ and $\Delta^{\dagger}:= \Delta+\Delta(Z/X)$, as well as their preimages in $Y'$ and $Z$, are normal crossing divisors.\\
Now we come to the key observation appeared in {\cite[Lemma~3.8]{VZ}}
\[
\Omega^1_{X/Y}(\log \Delta) = \Omega^1_{X/Y}(\log \Delta^{\dagger})
\]
which is due to the fact that $\Delta(Z/X) = f^{-1}\Delta(Y'/Y)$.
Then the enlarged log cotangent sheaves behave well under the finite base changes
\[
\psi^*\Omega^1_Y(\log S^{\dagger}) = \Omega^1_{Y'}(\log \psi^*S^{\dagger}) \quad \quad \quad \quad \psi'^*\Omega^1_X(\log \Delta^{\dagger}) \subset \Omega^1_Z(\log \psi'^*\Delta^{\dagger})
\]  
by the generalized Hurwitz's formula,
where the second inclusion is an isomorphism on the largest open subset where $\psi'$ is finite. Thus outside the exceptional locus of $\varphi$ we have
\[
\psi'^*\Omega^1_{X/Y}(\log \Delta) =
\psi'^*\Omega^1_{X/Y}(\log \Delta^{\dagger}) \cong \Omega^1_{Z/Y'}(\log \psi'^*\Delta^{\dagger})
= \Omega^1_{Z/Y'}(\log \Delta').
\]
Now $\zeta_{p,q}$ is defined as the following composition
\[
\begin{array}{c}
\psi^*F^{p,q} = \psi^*R^qf_*(\Omega^p_{X/Y}(\log \Delta^{\dagger}) \otimes \mathcal{L}^{-1}) \xrightarrow{\cong} R^q{\pi_2}_*(\pi^*_1(\Omega^p_{X/Y}(\log \Delta^{\dagger}) \otimes \mathcal{L}^{-1})) \to \\
R^q\tilde{f}_*(\tilde{\varphi}^* (\pi^*_1(\Omega^p_{X/Y}(\log \Delta^{\dagger}) \otimes \mathcal{L}^{-1}))) \xrightarrow{\cong} R^qg_*(\varphi^*\tilde{\varphi}^*\pi^*_1(\Omega^p_{X/Y}(\log \Delta^{\dagger}) \otimes \mathcal{L}^{-1})) = (F')^{p,q} 
\end{array}
\]
where the first isomorphism is given by the flat base change theorem and the second morphism is an isomorphism on the largest open subset where the normalization $\tilde{\varphi}$ is an isomorphism, in particular over $Y'_1$. Then the commutativity of (\ref{comm-2}) follows from the compatibility of the tautological short exact sequences (\ref{tautol-seq}).
\end{proof}

\begin{cor}\label{Omega-factor}
Keep the notations as above. Then those maps
\[
\zeta_{p,q}:\, \psi^*F^{p,q} \to (F')^{p,q}
\]
are isomorphisms over the smooth locus $U'$.
\end{cor}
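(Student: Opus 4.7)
The plan is to deduce the corollary directly from Lemma~\ref{zeta}, which already asserts that $\zeta_{p,q}$ is an isomorphism over the largest open subset $Y'_1 \subset Y'$ on which $X \times_Y Y'_1$ is normal. Thus the task reduces to verifying the inclusion $U' \subset Y'_1$.

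To check this inclusion, I would trace through the construction of $U'$ from Lemma~\ref{diagram}: it is (the smooth part of) a desingularization of the normalization of $U^{\#} \times_{\varphi(U)} U$, and by construction $U' \subset \psi^{-1}(U)$. Since $f: X \to Y$ is a good partial compactification of $V \to U$, one has $X|_U = V$, so after base change
\[
X \times_Y U' \;=\; V \times_U U'.
\]
Because $V \to U$ is smooth, the base-changed morphism $V \times_U U' \to U'$ is again smooth; combined with the smoothness of $U'$ itself, the total space $V \times_U U'$ is smooth, hence in particular normal. Therefore $U' \subset Y'_1$, and the restriction of $\zeta_{p,q}$ from Lemma~\ref{zeta} yields an isomorphism over $U'$.

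The proof is essentially formal once Lemma~\ref{zeta} and the geometric setup of Lemma~\ref{diagram} are in place, so no genuine obstacle arises. The only subtle point is the identification $X \times_Y U' = V \times_U U'$, which depends on having arranged $X|_U = V$ in the good partial compactification; this is precisely why the technical framework of Lemma~\ref{diagram} was set up in the previous section.
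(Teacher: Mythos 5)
Your argument is correct and follows essentially the same route as the paper: reduce to Lemma~\ref{zeta} by showing $U'$ lies in the normality locus $Y'_1$, then observe that over $U'$ the fiber product equals $V\times_U U'$, which is smooth (hence normal) since $V\to U$ is smooth and $U'$ is smooth. The only cosmetic difference is that the paper identifies $U'=\psi^{-1}(U)$ outright rather than just using the inclusion, but this does not affect the argument.
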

\begin{proof}
Note that the family $f:\, X \to Y$ is smooth over $U$. Thus the fiber product $X \times_Y Y'$ is smooth over $U'=\psi^{-1}(U)$. On the other hand, in the construction in Lemma~\ref{diagram} the covering space $Y'$ can be chosen to be smooth. Thus we know that the restriction of $X \times_Y Y'$ on $U'$ is smooth, thus normal. Then from Lemma~\ref{zeta} we know that
\[
\zeta_{p,q}:\, \psi^*F^{p,q} \to (F')^{p,q}
\] 
is an isomorphism over $U'$.
\end{proof}
\begin{rmk}
Lemma~\ref{zeta} and Corollary~\ref{Omega-factor} basically tells us that $(F',\tau') \cong \psi^*(F,\tau)$ over the smooth locus $U'$, which is crucial for the proof of Lemma~\ref{factor-through} in the next section. We will see that the defect along the boundary divisor $S'$ does not influence our argument.  
\end{rmk}

Here we also mention another application of Lemma~\ref{diagram}, which is about the Kodaira dimension of the determinant of direct image sheaves of the family.\\
We consider the determinant bundles of the direct image sheaves of some power of relative canonical bundles associated to $f:\, X \to Y$, the so-called Viehweg line bundle $A := \mathrm{det}(f_*\Omega^n_{X/Y}(\log \Delta)^{\otimes \nu})$. If the family $f$ has maximal variation, then we know that $A$ is big from {\cite[Corollary~3.6]{VZ}}. For a general family one could use Lemma~\ref{diagram} and obtain three line bundles
\begin{align}
  \label{eq:1}
 \left\{   \begin{array}{l}
  A := \mathrm{det}(f_*\Omega^n_{X/Y}(\log \Delta)^{\otimes \nu}) /Y \\
    A' := \mathrm{det}(g_*\Omega^n_{Z/Y'}(\log \Delta')^{\otimes \nu}) /Y' \\
    A^{\#} := \mathrm{det}(g^{\#}_*\Omega^n_{Z^{\#}/Y^{\#}}(\log \Delta^{\#})^{\otimes \nu}) /Y^{\#} 
  \end{array} \right.
\end{align}
The third named author and Xin Lu recently prove the following result, which can be regarded as an algebraic version of the relative isotriviality conjecture. The proof will be given in a forthcoming paper.
\begin{thm}%[\cite{lu-zuo}]
$\kappa(A) = \kappa(A') = \kappa(A^{\#}) =\mathrm{Var}(f)$. Moreover,
the restriction of $A$ on the fibers of the classifying map $U \to \mathcal{M}_h$ is trivial.
\end{thm}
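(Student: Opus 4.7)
The plan is to use Lemma~\ref{diagram} to transport the Kodaira dimension computation to the model $Y^{\#}$, where the family has maximal variation and the classical positivity theorems apply. The argument proceeds in three stages, followed by a separate argument for the triviality statement.

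First, I would establish $\kappa(A^{\#}) = \dim Y^{\#} = \mathrm{Var}(f)$. By property (e) of Lemma~\ref{diagram}, $U^{\#}$ is generically finite over $\mathcal{M}_h$, so the family $g^{\#}: Z^{\#} \to Y^{\#}$ has maximal variation, and $\mathrm{Var}(g^{\#}) = \dim Y^{\#} = \mathrm{Var}(f)$. Since $g^{\#}$ is semi-stable in codimension one and birational to a mild projective morphism (properties (a)--(b) of Lemma~\ref{diagram}), Viehweg-type positivity (as in \cite[Corollary~3.6]{VZ}) applies directly to $g^{\#}$ and yields that $A^{\#}$ is big; in particular $\kappa(A^{\#}) = \dim Y^{\#}$.

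Second, I would compare $A'$ with $\eta^* A^{\#}$ and $\psi^* A$ with $A'$. Since $\eta: Y' \to Y^{\#}$ is smooth in codimension one and, by (c) of Lemma~\ref{diagram}, $Z \to Z^{\#} \times_{Y^{\#}} Y'$ is birational, flat base change identifies $\eta^*(g^{\#})_*\Omega^n_{Z^{\#}/Y^{\#}}(\log\Delta^{\#})^{\otimes\nu}$ with $g_*\Omega^n_{Z/Y'}(\log\Delta')^{\otimes\nu}$ away from a codimension-two locus; taking determinants gives $\eta^* A^{\#} \cong A'$ as reflexive line bundles on the smooth variety $Y'$. After arranging $\eta$ to have connected fibers via Stein factorization, as in the proof of Lemma~\ref{diagram}, this gives $\kappa(A') = \kappa(\eta^* A^{\#}) = \kappa(A^{\#})$. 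For the comparison with $A$, Lemma~\ref{zeta} in bidegree $(n,0)$ produces a morphism $\psi^* f_*\Omega^n_{X/Y}(\log\Delta)^{\otimes\nu} \to g_*\Omega^n_{Z/Y'}(\log\Delta')^{\otimes\nu}$ that is an isomorphism on the big open subset $Y'_1$ of Lemma~\ref{zeta}; taking determinants yields $\psi^* A \cong A'$ on a codimension-two complement, hence globally. As $\psi$ is finite, $\kappa(A) = \kappa(\psi^* A) = \kappa(A')$, and the chain of equalities closes.

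Third, for the triviality on fibers of $\varphi$, combining the two isomorphisms of Step~2 I would obtain $(\psi|_{U'})^* A \cong (\eta|_{U'})^* A^{\#}$ over the smooth locus $U'$. A general fiber $F$ of $\varphi: U \to \mathcal{M}_h$ pulls back under $\psi|_{U'}$ to a disjoint union of fibers of $\eta|_{U'}: U' \to U^{\#}$, because the classifying map factors as $U \to \varphi(U) \xleftarrow{\mathrm{finite}} U^{\#}$; on these fibers $(\eta|_{U'})^* A^{\#}$ is tautologically trivial. Hence $(\psi|_{U'})^* A|_{\psi^{-1}(F)}$ is trivial, and since $\psi|_{U'}$ is finite surjective onto $F$ and $A$ is weakly positive (Viehweg), $A|_F$ is itself trivial.

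The main obstacle will be the second stage, where the enlarged log structures $\Omega^1_{X/Y}(\log\Delta^{\dagger})$ introduced in the proof of Lemma~\ref{zeta} and the analogous boundary corrections on $Y^{\#}$ are not a priori compatible on the nose; one must show that the discrepancies between $\psi^* A$, $A'$, and $\eta^* A^{\#}$ are supported in codimension $\geq 2$ so that reflexivity forces honest isomorphisms of line bundles. The semi-stability in codimension one in Lemma~\ref{diagram}(b)--(c) is exactly the input that makes these defects disappear in codimension one; checking this carefully, together with the behavior under the Stein-factorization replacements used to make $\eta$ have connected fibers, is the technical heart of the argument.
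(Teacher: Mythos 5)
The paper does not actually prove this theorem: it is stated as a result of the third author and Xin~Lu, with the explicit remark that ``the proof will be given in a forthcoming paper.'' There is therefore no paper proof to compare your attempt against, and any assessment can only address the internal plausibility of your proposal.

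With that caveat, your plan is a natural one and the first stage is essentially sound: maximal variation of $g^{\#}$ together with the Viehweg positivity invoked in the paper (\cite[Corollary~3.6]{VZ}) does give $\kappa(A^{\#})=\dim Y^{\#}=\mathrm{Var}(f)$, and $\kappa(\eta^*A^{\#})=\kappa(A^{\#})$ once $\eta$ has connected fibers. However, the second stage as written has two genuine gaps. First, Lemma~\ref{zeta} compares the sheaves $F^{p,q}=R^qf_*(\Omega^p_{X/Y}(\log\Delta)\otimes\mathcal L^{-1})$; in bidegree $(n,0)$ this is $f_*\mathcal O_X$, not $f_*\Omega^n_{X/Y}(\log\Delta)^{\otimes\nu}$, so the lemma does not directly produce the morphism $\psi^*f_*\Omega^n_{X/Y}(\log\Delta)^{\otimes\nu}\to g_*\Omega^n_{Z/Y'}(\log\Delta')^{\otimes\nu}$ you need; one would have to re-run the comparison of Lemma~\ref{zeta} for $\nu$-fold tensor powers of the top relative log form. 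Second, and more seriously, the locus where that comparison is an isomorphism is $Y'_1$ (resp.\ $U'$ after Corollary~\ref{Omega-factor}), whose complement can contain the boundary divisor $S'$, which is typically of codimension one; the reflexivity argument ``$\psi^*A\cong A'$ up to codimension two, hence globally'' does not go through without a separate argument that the discrepancy divisor along $S'$ is actually trivial (this is precisely where the enlarged log structure $\Delta^{\dagger}$ and semi-stability in codimension one must be used quantitatively, not just qualitatively). Finally, in stage three, from $\pi^*(A|_F)\cong\mathcal O$ for $\pi$ finite surjective you can conclude $A|_F$ is a torsion line bundle (via the norm map), but weak positivity does not by itself upgrade torsion to trivial; you would need an additional argument, e.g.\ involving the structure of $\mathrm{Pic}^0$ of the fiber or of the moduli map, to conclude triviality rather than merely finite order.
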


%
%
%The second application of Lemma~\ref{diagram} is about the Kodaira dimension of the determinant of direct image sheaves of the family.\\
%We consider the determinant bundles of the direct image sheaves of some power of relative canonical bundles associated to $f:\, X \to Y$, the so-called Viehweg line bundle $A := \mathrm{det}(f_*\Omega^n_{X/Y}(\log \Delta)^{\otimes \nu})$. If the family $f$ has maximal variation, then we know that $A$ is big from {\cite[Corollary~3.6]{VZ}}. For a general family one could use Lemma~\ref{diagram} and obtain three line bundles
%\begin{align}
%  \label{eq:1}
% \left\{   \begin{array}{l}
%  A := \mathrm{det}(f_*\Omega^n_{X/Y}(\log \Delta)^{\otimes \nu}) /Y \\
%    A' := \mathrm{det}(g_*\Omega^n_{Z/Y'}(\log \Delta')^{\otimes \nu}) /Y' \\
%    A^{\#} := \mathrm{det}(g^{\#}_*\Omega^n_{Z^{\#}/Y^{\#}}(\log \Delta^{\#})^{\otimes \nu}) /Y^{\#} 
%  \end{array} \right.
%\end{align}
%The third named author and Xin Lu recently prove the following result, which can be regarded as an algebraic version of the relative isotriviality conjecture
%\begin{thm}[\cite{lu-zuo}]
%$\kappa(A) = \kappa(A') = \kappa(A^{\#}) =\mathrm{Var}(f)$. Moreover,
%the restriction of $A$ on the fibers of the classifying map $U \to \mathcal{M}_h$ is trivial.
%\end{thm}
%
%
%
%

\section{Viehweg-Zuo construction on $Y^{\#}$ and the Relative Isotriviality}
In this section we shall prove our first main result:
\begin{thm}[Relative Isotriviality]\label{main-thm}
Let $(f:\,V \to U,\mathcal{L})\in \mathscr{M}_h(U)$ be a smooth family of polarized manifolds with semi-ample canonical sheaves and fixed Hilbert polynomial $h$, which induces the classifying map $\varphi:\, U \to \mathcal{M}_h$. Let $\gamma:\, \mathbb{C} \to U$ be an entire curve. Then the image curve $\gamma(\mathbb{C})$ is contained in a fiber of $\varphi$.
In particular, if $U$ contains a Zariski dense entire curve, then the family $V \to U$ is isotrivial. 
\end{thm}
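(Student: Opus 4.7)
My plan is to run the Viehweg--Zuo hyperbolicity argument on the maximally varying family $g^{\#}:\,Z^{\#}\to Y^{\#}$ produced by Lemma~\ref{diagram}, then transfer the resulting negatively curved pseudometric downstairs through the ramified lift $\psi_{\mathbb{C}}:\,\mathbb{C}'\to\mathbb{C}$ so as to invoke Lemma~\ref{orbi-AS}. First I replace the original family by a birational model fitting into the diagram of Lemma~\ref{diagram}, producing a finite cover $\psi:\,U'\to U$ and a dominant, smooth-in-codimension-one map $\eta:\,U'\to U^{\#}$ whose target supports the maximally varying family. The normalization $\mathbb{C}'$ of $\mathbb{C}\times_{U}U'$ gives a holomorphic finite ramified cover $\psi_{\mathbb{C}}:\,\mathbb{C}'\to\mathbb{C}$ together with a lift $\gamma':\,\mathbb{C}'\to U'$ of $\gamma$; it then suffices to show that the composite $\tilde\gamma:=\eta\circ\gamma':\,\mathbb{C}'\to U^{\#}$ is constant, since via $\psi$ this is equivalent to the constancy of $\varphi\circ\gamma$.

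On $Y^{\#}$ I apply the Viehweg--Zuo construction to the deformation Higgs bundle $(F^{\#},\tau^{\#})$. Because $g^{\#}$ is mild, semistable in codimension one, and has maximal variation (item (e) of Lemma~\ref{diagram}), iterated application of the generalized Kodaira--Spencer map produces an embedding of a big line bundle into a tensor power of $\Omega^{1}_{Y^{\#}}(\log S^{\#})$ twisted by a Hodge-theoretic piece of negative curvature. Combining this with the curvature negativity of the kernels of $\tau^{\#}$, one obtains, exactly as in \cite{VZ-1}, a complex Finsler pseudometric $F^{\#}$ on $T_{U^{\#}}$ whose holomorphic sectional curvature is bounded above by some $-\epsilon<0$ on the locus $\{F^{\#}\neq 0\}$.

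Now I pull $F^{\#}$ back via $\tilde\gamma$ to obtain the semipositive $2$-tensor $g:=\tilde\gamma^{\ast}F^{\#}$ on $\mathbb{C}'$, and this is where the compatibility results of section~\ref{compatibility} become crucial. The commutative diagrams (\ref{comm-1}) and (\ref{comm-2}), reinforced by Corollary~\ref{Omega-factor}, identify $\eta^{\ast}(F^{\#},\tau^{\#})$ with $(F',\tau')\cong\psi^{\ast}(F,\tau)$ over the smooth locus $U'$; tracing these identifications through the explicit Finsler construction forces the pullback of $F^{\#}$ under $\tilde\gamma$ to factor through $\psi_{\mathbb{C}}^{\ast}\gamma^{\ast}\Omega^{1}_{Y}(\log S)$. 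In particular $g$ admits a local expression $g=k\,|\psi_{\mathbb{C}}^{\ast}dt|^{2}$ with $k\ge 0$, so it is a ramified pseudometric with respect to $\psi_{\mathbb{C}}$ in the sense of Definition~\ref{def_ram_metric}, and the curvature bound for $F^{\#}$ pulls back to
\[
\sqrt{-1}\,\partial_{z}\partial_{\bar z}\log g(z)\ \ge\ \epsilon\cdot g(z)
\]
on $\mathcal{U}_{g}\subset\mathbb{C}'$ in the sense of currents.

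Lemma~\ref{orbi-AS} then forbids such an inequality and forces $\mathcal{U}_{g}=\emptyset$, i.e.\ $g\equiv 0$ on $\mathbb{C}'$. Unpacking this vanishing through the Viehweg--Zuo construction and invoking the maximal variation of $g^{\#}$ (which guarantees that the Finsler pseudometric is positive in generic directions of $T_{U^{\#}}$), one concludes that $\tilde\gamma$ must be constant, whence $\varphi\circ\gamma$ is constant and $\gamma(\mathbb{C})$ lies in a fiber of $\varphi$; the corollary about Zariski dense entire curves follows immediately. The principal difficulty in this plan is the factorization step above: turning the pullback of a Finsler pseudometric on $U^{\#}$ by $\tilde\gamma$ into a genuine \emph{ramified} pseudometric relative to $\psi_{\mathbb{C}}$, rather than just an arbitrary semipositive tensor on $\mathbb{C}'$, requires that the identifications of Lemma~\ref{zeta} and Corollary~\ref{Omega-factor}, which a priori hold only in codimension one on $Y'$, translate into pointwise identifications along $\gamma'(\mathbb{C}')$; this is essentially the key technical Lemma~\ref{factor-through} alluded to in the introduction, and it is the only place where the hypothesis that the canonical sheaves be semi-ample genuinely enters through the Viehweg--Zuo machinery.
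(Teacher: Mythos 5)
Your proposal follows essentially the same route as the paper: run the Viehweg--Zuo construction on the maximally varying family over $Y^{\#}$, use Lemma~\ref{factor-through} (which rests on the compatibility statements of Lemma~\ref{zeta} and Corollary~\ref{Omega-factor}) to make the pulled-back Finsler pseudometric on $\mathbb{C}'$ a genuine ramified pseudometric relative to $\psi_{\mathbb{C}}$, and then conclude via Lemma~\ref{orbi-AS}. The one step you have not spelled out and that is not automatic is the reduction that makes $\gamma(\mathbb{C})$ Zariski dense in $U$ (so that $\eta\circ\gamma'(\mathbb{C}')$ is Zariski dense in $Y^{\#}$); this is what guarantees the Finsler pseudometric does not degenerate identically along $\tilde\gamma$, and without it the passage from $g\equiv 0$ to ``$\tilde\gamma$ is constant'' is not forced, since the image of $\tilde\gamma$ could a priori lie in the degeneracy locus of $F^{\#}$. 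A secondary inaccuracy: the semi-ampleness hypothesis does not enter only through Lemma~\ref{factor-through}; it is also used (via Viehweg's Lemma~7.6 and Koll\'ar's theorem) in the very construction of the finite cover $U^{\#}$ carrying the universal family in Lemma~\ref{diagram}. Neither point undermines the overall architecture, which matches the paper's.
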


We argue by contradiction. Suppose that there exists an entire curve $\gamma:\, \mathbb{C} \to U$ which is not contained in the fiber of the classifying map. After replacing $U$ by the Zariski closure of the image curve we can assume that $\gamma(\mathbb{C})$ is Zariski dense in $U$.\\
From the construction in section~\ref{compatibility} we can compactify the family $V \to U$ as $f:\, X \to Y$, and the compactified family $f$ fits into the following commutative diagram
\[
\xymatrix{
X \ar[d]^f  & Z \ar[l] \ar[r] \ar[d]^g & Z^{\#} \ar[d]^{g^{\#}} \\
Y           & Y' \ar[l]_{\psi} \ar[r]^{\eta}  & Y^{\#}
}
\]
where $\psi$ is a finite morphism and the restriction $g^{\#}:\, V^{\#} \to U^{\#}$ on the smooth locus $U^{\#}$ of $Y^{\#}$ has a generically finite classifying map to $\mathcal{M}_h$ (cf, Lemma~\ref{diagram}). Note that in this process we may replace $U$ by some birational transformation of it. Since we assume $\gamma(\mathbb{C})$ is Zariski dense, one can always lift $\gamma$ to the new base space $U$.\\
So we define $\psi_{\mathbb{C}}:\, \mathbb{C}' \to \mathbb{C}$ to be the base change of the finite morphism $\psi:\, Y' \to Y$ along the holomorphic map $\gamma$, namely we have the following Cartesian diagram
\[
\xymatrix{
\mathbb{C}' \ar[r]^{\gamma'} \ar[d]^{\psi_{\mathbb{C}}} & Y' \ar[d]^{\psi} \\
\mathbb{C} \ar[r]^{\gamma} & Y.
}
\] 
Then $\psi_{\mathbb{C}}$ is a holomorphic finite ramified covering map with possibly infinite ramified points. By our assumption on $\gamma$ we know that the following composition map
\[
\mathbb{C}' \xrightarrow{\gamma'} U' \xrightarrow{\eta} U^{\#} \xrightarrow{\varphi^{\#}} \mathcal{M}_h
\]
is non-constant. Moreover, since $\varphi \circ \gamma(\mathbb{C})$ is Zariski dense in $\varphi(U)$, we know that the image $\eta \circ \gamma'(\mathbb{C}')$ is Zariski dense in $Y^{\#}$.\\

Now we can apply the Viehweg-Zuo construction on $Y^{\#}$ since the family $g^{\#}:\, V^{\#} \to U^{\#}$ has maximal variation. By {\cite[Lemma~6.3]{VZ-1}} (see also {\cite[Lemma~4.4]{VZ}}) we can find a Hodge bundle $(E^{\#},\theta^{\#})$ on $Y^{\#}$ with the following comparison maps
\[
\xymatrix{
(F^{\#})^{p,q} \ar[rr]^-{(\tau^{\#})^{p,q}} \ar[d]^{\rho^{p,q}_{\#}} & & (F^{\#})^{p-1,q+1} \otimes \Omega^1_{Y^{\#}}(\log S^{\#}) \ar[d]^{\rho^{p-1,q+1}_{\#} \otimes \iota} \\
(A^{\#})^{-1} \otimes (E^{\#})^{p,q} \ar[rr]^-{\mathrm{Id} \otimes (\theta^{\#})^{p,q}} & & (A^{\#})^{-1} \otimes (E^{\#})^{p-1,q+1} \otimes \Omega^1_{Y^{\#}}(\log (S^{\#} + T^{\#}))
}
\]
where $T^{\#}$ is the set of extra singularities (cf. {\cite[\S4]{VZ}} for the details of the construction of $(E^{\#},\theta^{\#})$).\\
Write $\gamma_{\#}:= \eta \circ \gamma'$. We have the following chain of maps
\begin{align}\label{chain-maps}
T_{\mathbb{C}'} \to (\gamma')^*T_{Y'}(-\log S') \to \gamma^*_{\#}T_{Y^{\#}}(-\log S^{\#}) \xrightarrow{\gamma^*_{\#}(\tau^{\#})^{n-1,1}} \gamma^*_{\#}(F^{\#})^{n-1,1} \xrightarrow{\rho^{n-1,1}_{\#}} \gamma^*_{\#} \left( (A^{\#})^{-1} \otimes (E^{\#})^{n-1,1} \right).
\end{align}

\begin{lem}\label{factor-through}
The composition map defined in (\ref{chain-maps}) actually factors through $\psi^*_{\mathbb{C}}T_{\mathbb{C}}$.  
\end{lem}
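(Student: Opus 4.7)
The plan is to re-express the composition (\ref{chain-maps}), which is built using data on $Y^{\#}$, in terms of pull-back data from $Y$ along $\psi$, and then to apply the chain rule identity $d(\psi\circ\gamma')=d(\gamma\circ\psi_{\mathbb{C}})$ in order to extract the factorization through $d\psi_{\mathbb{C}}\colon T_{\mathbb{C}'}\to\psi_{\mathbb{C}}^{*}T_{\mathbb{C}}$.

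First, I would invoke the commutative diagram (\ref{comm-1}) to identify $(F',\tau')$ with $\eta^{*}(F^{\#},\tau^{\#})$ on the large open subset of $Y'$ where $\eta$ is smooth. Under this identification, the middle segment of (\ref{chain-maps})---the composition of $(\gamma')^{*}T_{Y'}(-\log S')\xrightarrow{d\eta}\gamma_{\#}^{*}T_{Y^{\#}}(-\log S^{\#})$ with the Kodaira--Spencer map $\gamma_{\#}^{*}(\tau^{\#})^{n,0}$---coincides, via $(F')^{n-1,1}\cong\eta^{*}(F^{\#})^{n-1,1}$, with the single map $(\gamma')^{*}(\tau')^{n,0}\colon(\gamma')^{*}T_{Y'}(-\log S')\to(\gamma')^{*}(F')^{n-1,1}$ induced by the $Y'$-Higgs field. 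Next, on the smooth locus $U'$, Corollary~\ref{Omega-factor} makes the maps $\zeta_{p,q}\colon\psi^{*}F^{p,q}\to(F')^{p,q}$ isomorphisms, and the diagram (\ref{comm-2}) of Lemma~\ref{zeta} realizes $(\tau')^{n,0}|_{U'}$ as $\psi^{*}\tau^{n,0}$ composed with $d\psi$; dually, the induced map $T_{Y'}(-\log S')\to(F')^{n-1,1}$ on $U'$ factors through $d\psi\colon T_{Y'}(-\log S')\to\psi^{*}T_{Y}(-\log S)$.

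Combining these observations, the first four arrows of (\ref{chain-maps}) on $(\gamma')^{-1}(U')$ can be rewritten as
\[
T_{\mathbb{C}'}\xrightarrow{d\gamma'}(\gamma')^{*}T_{Y'}(-\log S')\xrightarrow{d\psi}\psi_{\mathbb{C}}^{*}\gamma^{*}T_{Y}(-\log S)\xrightarrow{\psi_{\mathbb{C}}^{*}\gamma^{*}\tau^{n,0}}\psi_{\mathbb{C}}^{*}\gamma^{*}F^{n-1,1}\xrightarrow{\zeta}(\gamma')^{*}(F')^{n-1,1}.
\]
Applying the chain rule $d\psi\circ d\gamma'=d(\psi\circ\gamma')=d(\gamma\circ\psi_{\mathbb{C}})=\psi_{\mathbb{C}}^{*}(d\gamma)\circ d\psi_{\mathbb{C}}$ then factors the first two arrows through $d\psi_{\mathbb{C}}$, and post-composition with $\rho_{\#}^{n-1,1}$, which acts purely on the $Y^{\#}$ side, yields the desired factorization of (\ref{chain-maps}) through $\psi_{\mathbb{C}}^{*}T_{\mathbb{C}}$ over $(\gamma')^{-1}(U')$.

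Finally, I would extend the factorization across the complement of $(\gamma')^{-1}(U')$, which is a discrete subset of the Riemann surface $\mathbb{C}'$ since $\gamma$ is Zariski dense in $U$. This amounts to extending a section of the line bundle $\mathrm{Hom}(\psi_{\mathbb{C}}^{*}T_{\mathbb{C}},\,\gamma_{\#}^{*}((A^{\#})^{-1}\otimes(E^{\#})^{n-1,1}))$ on $\mathbb{C}'$ across isolated points, which Riemann's removable singularity theorem handles once local boundedness is verified from the fact that all the morphisms $\tau,\tau^{\#},\zeta,\rho_{\#}^{n-1,1}$ are maps of coherent sheaves pulled back by the holomorphic $\gamma'$. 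The hard part, I expect, will be the careful alignment of the two compatibilities (\ref{comm-1}) (for $\eta$) and (\ref{comm-2}) (for $\psi$) on the same open subset---in particular, $\zeta_{n-1,1}$ is only guaranteed to be invertible on $U'$---so that both diagrams can be invoked simultaneously; once this bookkeeping is in order, the factorization through $d\psi_{\mathbb{C}}$ drops out as a formal consequence of the chain rule.
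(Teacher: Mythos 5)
Your argument is essentially the paper's: both proofs combine the compatibility diagrams (\ref{comm-1}) and (\ref{comm-2}) with Corollary~\ref{Omega-factor} to rewrite the chain (\ref{chain-maps}) so that its first arrows pass through $\psi^*T_Y(-\log S)$, and the chain rule $d(\psi\circ\gamma')=d(\gamma\circ\psi_{\mathbb{C}})$ you invoke is precisely the commutativity of the leftmost square in the paper's final pulled-back diagram. One correction worth making, though: the step of ``extending across the complement of $(\gamma')^{-1}(U')$'' is vacuous, and for the wrong reason. Since $\gamma$ maps $\mathbb{C}$ into $U$ and $U'=\psi^{-1}(U)$ by construction, one has $\gamma'(\mathbb{C}')\subset U'$ outright --- the paper records this explicitly --- so there is no complement to cross; Zariski density of $\gamma(\mathbb{C})$ in $U$ is not what is doing the work there. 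The genuine extension issue sits elsewhere: the identification $\eta^*(F^{\#})^{n-1,1}\cong(F')^{n-1,1}$ from (\ref{comm-1}) holds only over the large open subset of $Y'$ where $\eta$ is smooth, which need not contain all of $U'$, so the curve may meet its complement. The paper handles this on $Y'$ by observing that the composite $(F')^{n-1,1}\to\eta^*\bigl((A^{\#})^{-1}\otimes(E^{\#})^{n-1,1}\bigr)$ extends because both sheaves are reflexive; your removable-singularity argument on $\mathbb{C}'$ would serve as a substitute, but it must be aimed at that locus rather than at $\mathbb{C}'\setminus(\gamma')^{-1}(U')$.
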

\begin{proof}
From commutative diagrams (\ref{comm-1}) and (\ref{comm-2}) we have
\[
\xymatrix{
\eta^*T_{Y^{\#}}(-\log S^{\#}) \ar[r] &  \eta^*(F^{\#})^{n-1,1} \ar[d]^{\cong} \\
T_{Y'}(-\log S') \ar[r] \ar[u] \ar[d] & (F')^{n-1,1} \\
\psi^*T_Y(-\log S) \ar[r] & \psi^*F^{n-1,1} \ar[u]^{\zeta_{n-1,1}}.
}
\]
%where both squares commutes. 
The first square comes from the fact $\eta^*(F^{\#})^{n,0} = (F')^{n,0} = \mathcal{O}_{Y'}$ and the following commutative diagram
\[
\xymatrix{
\eta^*T_{Y^{\#}}(-\log S^{\#}) \ar[r] &  \eta^*(F^{\#})^{n-1,1} \otimes \eta^*\left( \Omega^1_{Y^{\#}}(\log S^{\#}) \otimes T_{Y^{\#}}(-\log S^{\#})  \right)  \ar[r] & \eta^*(F^{\#})^{n-1,1} \ar@{=}[d] \\
T_{Y'}(-\log S') \ar[r] \ar[u] \ar@{=}[d] & \eta^*(F^{\#})^{n-1,1} \otimes \eta^*\Omega^1_{Y^{\#}}(\log S^{\#}) \otimes T_{Y'}(-\log S') \ar[r] \ar[u] \ar[d] & \eta^*(F^{\#})^{n-1,1} \ar[d]^{\cong} \\
T_{Y'}(-\log S') \ar[r] & (F')^{n-1,1} \otimes \Omega^1_{Y'}(\log S') \otimes T_{Y'}(-\log S') \ar[r] & (F')^{n-1,1}
}
\] 
where the map $\eta^*(F^{\#})^{n-1,1} \to (F')^{n-1,1}$ is an isomorphism over a large open subset of $Y'$.\\
The second square is constructed in a similar manner, and it only commutes over $U'$ since $\zeta:\, \psi^*F \to F'$ is only an isomorphism over $U'$ ( cf. Corollary~\ref{Omega-factor} ). Note that the image curve $\gamma'(\mathbb{C}')$ is contained in $U'$. Thus the commutativity above is sufficient for our purpose.\\
Pulling it back to $\mathbb{C}'$ we obtain the following diagram
\[
\xymatrix{
  &  \gamma^*_{\#}T_{Y^{\#}}(-\log S^{\#}) \ar[r] & \gamma^*_{\#}(F^{\#})^{n-1,1} \ar[r] \ar[d]^{\cong} & \gamma^*_{\#}\left( (A^{\#})^{-1} \otimes (E^{\#})^{n-1,1} \right) \\
T_{\mathbb{C}'} \ar[d] \ar[r] & (\gamma')^*T_{Y'}(-\log S') \ar[u] \ar[r] \ar[d] & (\gamma')^*(F')^{n-1,1} \ar[ru]  &  \\
\psi^*_{\mathbb{C}}T_{\mathbb{C}} \ar[r] & (\gamma')^*\psi^*T_Y(-\log S) \ar[r] & (\gamma')^*\psi^*F^{n-1,1} \ar[u] &
}
\]
where all the squares commutes. Note that although the map $\eta^*(F^{\#})^{n-1,1} \to (F')^{n-1,1}$ is only an isomorphism over a large open subset of $Y'$, we can extend the composition map $(F')^{n-1,1} \to \eta^*\left( (A^{\#})^{-1} \otimes (E^{\#})^{n-1,1} \right)$ since both sides are reflexive sheaves over $Y'$.\\ 
The chain of maps (\ref{chain-maps}) is exactly the path on the top of this diagram. Clearly it factors through $\psi^*_{\mathbb{C}}T_{\mathbb{C}}$ from the bottom of this diagram.
\end{proof}
Now we iterate the Higgs maps in (\ref{chain-maps}) and get
\begin{align}
  \label{iter-chain-maps}
\begin{array}{c}
T^{\otimes k}_{\mathbb{C}'} \to (\gamma')^*\Sym^kT_{Y'}(-\log S') \to \gamma^*_{\#}\Sym^kT_{Y^{\#}}(-\log S^{\#}) \xrightarrow{\gamma^*_{\#}(\tau^{\#})^{n-k,k}} \\
\gamma^*_{\#}(F^{\#})^{n-k,k} \xrightarrow{\rho^{n-k,k}_{\#}} \gamma^*_{\#} \left( (A^{\#})^{-1} \otimes (E^{\#})^{n-k,k} \right).  
\end{array}
\end{align}
We say that the integer $m$ is the \emph{maximal length of iteration} if the composition map (\ref{iter-chain-maps}) for $k=m$ factors through
\begin{align}\label{iteration-m}
T^{\otimes m}_{\mathbb{C}'} \to \gamma^*_{\#} (A^{\#})^{-1} \otimes \mathrm{Ker}(\theta^{n-m,m}_{\gamma_{\#}})
\end{align}
where $\theta^{p,q}_{\gamma_{\#}} :\, \gamma^*_{\#} (E^{\#})^{p,q} \xrightarrow{\gamma^*_{\#}(\theta^{\#})^{p,q}} \gamma^*_{\#} (E^{\#})^{p-1,q+1} \otimes \gamma^*_{\#}\Omega^1_{Y^{\#}}(\log(S^{\#} + T^{\#})) \to \gamma^*_{\#} (E^{\#})^{p-1,q+1} \otimes \Omega^1_{\mathbb{C}'}$.

\begin{proof}[proof of Theorem~\ref{main-thm}]
Using the argument in {\cite[\S7]{VZ-1}} one can construct a suitable singular metric $g_{A^{\#}}$ on the ample line bundle $A^{\#}$ such that the pull back of the metric $g^{-1}_{A^{\#}} \otimes g_{hodge}$ via (\ref{iteration-m}) induces a complex Finsler (pseudo)metric $F$ on $Y^{\#}$ with strictly negative holomorphic sectional curvature along the (ramified) entire curve (see also {\cite[\S2.3]{DLSZ}} for a summary).\\
Using Lemma~\ref{factor-through} iteratively, we show that the map (\ref{iteration-m}) actually factors through
\[
\psi^*_{\mathbb{C}}T^{\otimes m}_{\mathbb{C}} \to \gamma^*_{\#} (A^{\#})^{-1} \otimes \mathrm{Ker}(\theta^{n-m,m}_{\gamma_{\#}}).
\]
By Definition~\ref{def_ram_metric} we know that the pull back of $F$ on $\mathbb{C}'$  is in fact a \emph{ramified metric} with respect to $\psi_{\mathbb{C}}$. 
The curvature property of $F$ gives us the inequality
\[
\sqrt{-1}\, \partial \bar{\partial}\, \log \Vert \gamma'_{\#}(z) \Vert^2_F \gtrsim \Vert \gamma'_{\#}(z) \Vert^2_F  dz \wedge d\bar{z}
\]
(cf. {\cite[Theorem~2.20]{DLSZ}}). Note that $\gamma'(\mathbb{C}') \cap S' = \emptyset$. Thus the defect of $\zeta_{p,q}$ along the boundary $S'$ in Corollary~\ref{Omega-factor} will not violate our curvature inequality.\\
This curvature inequality guarantees that one can find a positive cosntant $\epsilon$ such that
\[
\Delta \log f(z) \geq \epsilon \cdot f(z)
\]
holds on the complement of zero set of $\gamma^*_{\#}F$,
where $f(z)$ is the local function of $\gamma^*_{\#}F = f(z)|\psi^*_{\mathbb{C}}dt|^2$. But this contradicts to Lemma~\ref{orbi-AS}.
\end{proof}

\section{Borel-Chern Hyperbolicity}
In this section we shall prove Theorem~\ref{thm-C}. For any smooth algebraic curve $C$ and a holomorphic map $\gamma:\, C \to U$, we consider the following composition map
\[
C \xrightarrow{\gamma} U \xrightarrow{\varphi} \mathcal{M}_h.
\]
We first observe that
\begin{lem}\label{BC-to-alg}
Let $(f:\, V \to U,\mathcal{L})$ be a non-isotrivial family as in Theorem~\ref{thm-C}, and $\gamma:\, C \to U$ be a holomorphic curve as above. Then the graph $\Gamma_{\gamma}$ in $C \times U$ is not Zariski dense if we know the composed holomorphic $\varphi \circ \gamma$ is an algebraic morphism. 
\end{lem}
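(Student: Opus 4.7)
The plan is to exhibit an explicit proper algebraic subvariety of $C\times U$ containing the graph $\Gamma_\gamma$. Since $\varphi\circ\gamma\colon C\to \mathcal{M}_h$ is assumed to be an algebraic morphism, its graph $\Gamma_{\varphi\circ\gamma}$ is a closed algebraic subvariety of $C\times \mathcal{M}_h$. I would then consider the morphism
\[
\mathrm{id}_C\times\varphi\colon C\times U\longrightarrow C\times \mathcal{M}_h,
\]
and define $W:=(\mathrm{id}_C\times\varphi)^{-1}(\Gamma_{\varphi\circ\gamma})$. This is an algebraic subvariety of $C\times U$, and by construction it contains $\Gamma_\gamma$, since for every $c\in C$ the point $(c,\gamma(c))$ maps to $(c,\varphi\circ\gamma(c))\in \Gamma_{\varphi\circ\gamma}$.

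The heart of the argument is the observation that $W$ is a \emph{proper} subset of $C\times U$. Indeed, the fiber of $W\to C$ over a point $c$ is precisely $\varphi^{-1}(\varphi(\gamma(c)))$, the fiber of the classifying map through $\gamma(c)$. Because the family is assumed non-isotrivial, $\varphi$ is non-constant, so each such fiber is a proper closed subvariety of $U$. In particular $W\subsetneq C\times U$, and hence $\Gamma_\gamma\subset W$ is not Zariski dense in $C\times U$.

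The proof is essentially \emph{soft}: no Higgs bundle machinery or metric analysis is required for this step. The only conceptual point is identifying the correct ``bounding'' subvariety $W$; once this is done, everything follows from the non-isotriviality hypothesis applied fiberwise. The main delicacy — if any — is a notational one, making sure the algebraicity hypothesis on $\varphi\circ\gamma$ is interpreted as a morphism of varieties (so that $\Gamma_{\varphi\circ\gamma}$ is genuinely a closed algebraic subvariety in $C\times \mathcal{M}_h$), which is precisely the content of Borel-hyperbolicity of one-dimensional targets built into the definition.
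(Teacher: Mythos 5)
Your proof is correct and uses essentially the same idea as the paper's: you bound $\Gamma_\gamma$ by the preimage $W=(\mathrm{id}_C\times\varphi)^{-1}(\Gamma_{\varphi\circ\gamma})$, whereas the paper takes the image of $\Gamma_\gamma$ under $\mathrm{id}_C\times\varphi$ and invokes surjectivity of $U\to\varphi(U)$; these are dual phrasings of the same containment argument, both hinging on non-isotriviality to guarantee $\dim\varphi(U)>0$.
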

\begin{proof}
We denote by $\Gamma_{\varphi \circ \gamma}$ the graph of $\varphi \circ \gamma$ in $C \times \varphi(U)$. Then we have the following commutative diagram
\[
\xymatrix{
\Gamma_{\gamma} \ar@{^{(}->}[r] \ar[d] & C \times U \ar[d]^{\mathrm{id} \times \varphi} \\
\Gamma_{\varphi \circ \gamma} \ar@{^{(}->}[r] & C \times \varphi(U)
}
\]
where both vertical maps are surjective since $\varphi:\, U \to \varphi(U)$ is surjective. Now the algebraicity of $\varphi \circ \gamma$ implies that $\Gamma_{\varphi \circ \gamma}$ is not Zariski dense in $C \times \varphi(U)$. Here we use the fact that $\varphi(U)$ has positive dimension. Then by the surjectivity we know that $\Gamma_{\gamma}$ is not Zariski dense in $C \times U$.
\end{proof}
Therefore, we focus on the algebraicity of the composed map $\varphi \circ \gamma$.\\[.2cm]

If the moduli space $\mathcal{M}_h$ carries a universal family, then $\varphi \circ \gamma$ is an algebraic morphism by {\cite[Theorem~B]{DLSZ}} and GAGA. 
In general, there is no universal family over $\mathcal{M}_h$. So we shall use the construction in Lemma~\ref{diagram} again.\\[.2cm]

We argue by contradiction. Suppose $\varphi \circ \gamma$ is not algebraic. Then after replacing $U$ by the Zariski closure of $\gamma(C)$ we can assume that $\gamma(C)$ is Zariski dense in $U$. Note that the image of classifying map still has positive dimension.  Then we can desingularize the image $\varphi(U)$ and do the compatible birational transform of $U$. Since $\gamma(C)$ is Zariski dense, one can lift $\gamma$ to the new birational model of $U$, which will still be denoted as $U$ for simplicity. Thus we have the following diagram
\[
C \xrightarrow{\gamma} U \xrightarrow{\varphi} U^{\diamond}
\]
where $U^{\diamond} \to \mathcal{M}_h$ is generically finite.\\
Next we apply the construction in Lemma~\ref{diagram} to our situation. Then we obtain
\begin{align}\label{diagram-C}
\xymatrix{
C' \ar[r]^{\gamma'} \ar[d]^{\psi_C} & Y' \ar[r]^{\eta} \ar[d]^{\psi} & Y^{\#} \ar[d]^{\psi_{\diamond}} \\
C \ar[r]^{\gamma} & Y \ar[r]^{\varphi} & Y^{\diamond}
}
\end{align}
where $Y^{\diamond}$ is a smooth projective compactification of $U^{\diamond}$,
$C'$ is the fiber product $C \times_Y Y'$. Furthermore, from the proof of Lemma~\ref{diagram} we can assume $\psi$ and $\psi_{\diamond}$ to be Galois covers, and thus so is $\psi_C$.\\
The strategy is to show that the composition map $\varphi \circ \gamma$ extends to $\bar{C} \to Y^{\diamond}$, where $\bar{C}$is the projective completions of $C$. Since we only have a maximally varied family over the Galois cover $U^{\#}$, it is necessary to generalize the criterion in {\cite[Theorem~A]{DLSZ}} to the following ramified version:
\begin{prop}\label{ram-cri}
Let $(f:\, V \to U,\mathcal{L})$ be the family as in Theorem~\ref{thm-C} and $\gamma:\, C \to U$ be a holomorphic map from a smooth quasi-projective curve to the base $U$. By applying Lemma~\ref{diagram} one obtains the diagram (\ref{diagram-C}). Assume that there is a Finsler pseudometric $h$ on
$T_{Y^{\#}}(-\log S^{\#})$ such that the pull-back metric along $\gamma_{\#}:= \eta \circ \gamma'$ is a ramified metric on $C'$ with respect to $\psi_C$, the holomorphic sectional curvature of $h$ along $\gamma_{\#}$ is negatively bounded from above, and that the following inequality holds in the sense of currents
\begin{align}
  \label{curv-ineq-C}
dd^c \log \Vert \gamma'_{\#}(z) \Vert^2_h \geq \epsilon \cdot \gamma^*_{\#}\psi^*_{\diamond}\omega  
\end{align}
where $\omega$ is the Fubini-Study (1,1)-form on $Y^{\diamond}$ and $\epsilon$ is some positive constant. Then $\varphi \circ \gamma$ extends to a holomorphic map $\bar{C} \to Y^{\diamond}$.
\end{prop}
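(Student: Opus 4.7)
The plan is to show that the pull-back form $(\varphi\circ\gamma)^{*}\omega$ has finite mass in every punctured coordinate disc around each point of the finite set $D:=\bar C\setminus C$. Once this is established, the classical Bishop/Griffiths-King removable singularity theorem for holomorphic maps from a punctured disc into a projective variety with finite Fubini-Study area will produce the desired holomorphic extension $\bar C\to Y^{\diamond}$ point by point.

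First I fix a puncture $p\in D$ and a small holomorphic coordinate disc $(\Delta,t)$ in $\bar C$ centred at $p$ with $\Delta^{*}\subset C$, equipped with its Poincaré metric $h_P=|t|^{-2}(\log|t|)^{-2}|dt|^{2}$ of constant curvature $-1$. Since $\psi_C$ is the base change of the Galois cover $\psi$, its restriction above $\Delta^{*}$ is a disjoint union of ramified covers of the form $z\mapsto z^{k}=t$; each such component is itself a punctured disc. The commutativity of diagram (\ref{diagram-C}) gives $\psi_{\diamond}\circ\gamma_{\#}=\varphi\circ\gamma\circ\psi_{C}$, hence $\gamma_{\#}^{*}\psi_{\diamond}^{*}\omega=\psi_{C}^{*}(\varphi\circ\gamma)^{*}\omega$, and the hypothesised curvature inequality (\ref{curv-ineq-C}) reads
\begin{equation*}
dd^{c}\log\lVert\gamma'_{\#}(z)\rVert^{2}_{h}\;\geq\;\epsilon\cdot\psi_{C}^{*}(\varphi\circ\gamma)^{*}\omega
\end{equation*}
on $C'$.

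Next I apply the ramified Ahlfors--Schwarz lemma (Lemma~\ref{ram-AS}) on the preimage of $\Delta^{*}$. By hypothesis $\gamma_{\#}^{*}h$ is a ramified pseudometric with respect to $\psi_C$ whose holomorphic sectional curvature is bounded from above by some constant $-c<0$ along $\gamma_{\#}$, and $\Delta^{*}$ carries $h_P$ of constant curvature $-1$, so the lemma yields
\begin{equation*}
\gamma_{\#}^{*}h\;\leq\;\tfrac{1}{c}\,\psi_{C}^{*}h_{P}.
\end{equation*}
In a uniformising coordinate $z$ on a branch of the cover, $\psi_{C}^{*}h_P=|z|^{-2}(\log|z|)^{-2}|dz|^{2}$, so that
\begin{equation*}
\log\lVert\gamma'_{\#}(z)\rVert^{2}_{h}\;\leq\;-2\log|z|-2\log\bigl|\log|z|\bigr|+O(1)
\end{equation*}
near $z=0$. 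Integrating the curvature current inequality over an annulus $\{r<|z|<\tfrac{1}{2}\}$ and passing to the limit $r\to 0$ via Stokes' theorem, the above logarithmic bound controls the boundary terms and yields $\int\psi_{C}^{*}(\varphi\circ\gamma)^{*}\omega<\infty$ on the preimage of $\Delta^{*}$. Since $\psi_C$ has finite degree, pushing down gives $\int_{\Delta^{*}}(\varphi\circ\gamma)^{*}\omega<\infty$. By the Bishop/Skoda extension theorem for finite-area holomorphic maps from punctured discs into projective manifolds, $\varphi\circ\gamma$ extends holomorphically across $p$; performing this argument at every $p\in D$ produces the desired morphism $\bar C\to Y^{\diamond}$.

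The main obstacle is handling the ramified and possibly degenerate nature of the pseudometric $\gamma_{\#}^{*}h$ along the loci where $\psi_C$ ramifies or where $\lVert\gamma'_{\#}(z)\rVert_h$ vanishes, so that the application of Lemma~\ref{ram-AS} and the subsequent Stokes-type integration on $C'$ are valid as equalities/inequalities of currents rather than merely of smooth forms. Once the Ahlfors--Schwarz majoration and the consequent finiteness of the Fubini--Study area are in place, the invocation of Bishop's theorem, and hence the extension, is routine.
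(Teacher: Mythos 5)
Your proposal is correct in spirit and reaches the same conclusion, but it takes a genuinely different technical route from the paper. The paper's proof stays agnostic about the punctures of $C$: it first averages the ramified metric $\gamma_{\#}^*h$ over the Galois group of $\psi_C$ (exactly as in the proof of Lemma~\ref{orbi-AS}) to produce a bounded invariant metric that descends to a pseudometric $h_C$ on $C$ itself, transports the two curvature inequalities (the lower bound by $\gamma^*_{\diamond}\omega$ and the self-bounding negative curvature) down to $C$, and then invokes the Nevanlinna-theoretic extension criterion of \cite[\S3]{DLSZ} applied to $\omega_{\gamma_\diamond}=\sqrt{-1}\,h_C\,dt\wedge d\bar t$. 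Your argument instead works upstairs on $C'$: over a small punctured disc around each boundary point you apply the ramified Ahlfors--Schwarz Lemma~\ref{ram-AS} directly to obtain the Poincar\'e-type majoration $\gamma_{\#}^*h\le c^{-1}\psi_C^*h_P$, then integrate the current inequality $dd^c\log\|\gamma_{\#}'(z)\|_h^2\ge\epsilon\,\psi_C^*(\varphi\circ\gamma)^*\omega$ to get finite Fubini--Study area, and close with Bishop's removable-singularity theorem. The two routes buy different things: the paper's descent-to-$C$ avoids having to re-prove a Schwarz lemma on the ramified cover of a punctured disc and lets it simply cite the smooth machinery of \cite{DLSZ}; yours is more self-contained, avoids the averaging step, and is local at the punctures, but it must justify the Stokes step more carefully than you indicate. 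The phrase \emph{``the logarithmic bound controls the boundary terms''} hides the real content: an upper bound on a subharmonic $u$ alone does not bound $\oint d^c u$. One needs to observe that the circle average $v(\sigma)$ of $u$ ($\sigma=-\log|z|$) is convex because $dd^cu\ge 0$ as a current, and that convexity together with $v(\sigma)\le 2\sigma-2\log\sigma+O(1)$ forces $v'(\sigma)\le 2$, which is what makes the flux $\int_{r<|z|<1/2}dd^cu$ bounded as $r\to 0$. With that made explicit, and noting that for $\Delta^*$ small enough each component of $\psi_C^{-1}(\Delta^*)$ is an unramified finite cover of the punctured disc (so again a punctured disc with the same Poincar\'e metric pulled back), your argument is sound and gives a valid alternative proof.
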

We now prove Theorem~\ref{thm-C} by using the criterion of Proposition~\ref{ram-cri}:
\begin{proof}[Proof of Theorem~\ref{thm-C}]
Let $\omega_{FS}$ be the Fubini-Study (1,1)-form on $Y^{\#}$.
Since $\psi_{\diamond}:\, Y^{\#} \to Y^{\diamond}$ is finite, one can find a positive constant $c$ such that
\[
\omega_{FS} \geq c \cdot \psi^*_{\diamond}\omega.
\] 
Note that $V^{\#} \to U^{\#}$ is a family with maximal variation. Thus we can apply {\cite[Theorem~2.19, Theorem~2.20]{DLSZ}} to the holomorphic map $\gamma_{\#}:\, C' \to Y^{\#}$ and find a Finsler (pseudo)metric $h$ constructed in the same manner as in (\ref{iteration-m}) which has holomorphic sectional curvature and satisfies the following curvature inequality
\[
dd^c \log \Vert \gamma'_{\#}(z) \Vert^2_h \gtrsim \gamma^*_{\#} \omega_{FS}.
\]
So we have
\[
dd^c \log \Vert \gamma'_{\#}(z) \Vert^2_h \geq \epsilon \cdot \gamma^*_{\#}\psi^*_{\diamond}\omega
\]
for some $\epsilon >0$. From Lemma~\ref{factor-through} we know that the pull-back metric $\gamma^*_{\#}h$ is in fact a ramified metric with respect to $\psi_C$. So all the conditions in Proposition~\ref{ram-cri} are satisfied, and thus $\varphi \circ \gamma$ extends. Now the Borel-Chern hyperbolicity follows from GAGA and Lemma~\ref{BC-to-alg}.
\end{proof}

The proof of Proposition~\ref{ram-cri} follows from the same line of reasoning as in {\cite[\S3]{DLSZ}}, plus the descend property of the ramified metrics.
\begin{proof}[Proof of Proposition~\ref{ram-cri}]
By our assumption we have the curvature inequality
\[
dd^c\log \Vert \partial_z \Vert^2_{\gamma^*_{\#}h} \geq \epsilon \cdot \gamma^*_{\#}\psi^*_{\diamond}\omega.
\]
Now since $\gamma^*_{\#}h$ is a ramified metric with respect to $\psi_C$ and $\psi_C$ is a finite Galois cover, one can use the method in the proof of Lemma~\ref{orbi-AS} to construct a Galois invariant metric $h_{\mathrm{inv}}$ with the same curvature inequality
\[
dd^c\log \Vert \partial_z \Vert^2_{h_{\mathrm{inv}}} \geq \epsilon' \cdot \gamma^*_{\#}\psi^*_{\diamond}\omega
\]
holding, and it can be descended to a bounded metric $h_C$ on $C$.
Thus near these points of $C$ where $\psi_C$ is \'etale, we have 
\[
dd^c \log h_C(t) \geq \epsilon' \cdot \gamma^*_{\diamond}\omega
\]
where $\gamma_{\diamond}:= \varphi \circ \gamma :\, C \to Y^{\diamond}$.\\
The assumption on the holomorphic sectional curvature implies that
\[
dd^c \log h_C(t) \geq c \cdot \sqrt{-1}\,h_C(t)dt \wedge d\bar{t}
\]
for some constant $c>0$. Now we define the semi-positive (1,1)-form
\[
\omega_{\gamma_{\diamond}} := \sqrt{-1}\,h_C(t)dt \wedge d\bar{t}
\]
on $C$. Then the two crucial curvature inequalities in {\cite[\S3]{DLSZ}} are satisfied for $\omega_{\gamma_{\diamond}}$. Thus the Nevanlinna theoretic argument in loc. cit. is applicable in our situation and gives us the extension of $\gamma_{\diamond}$.
\end{proof}

\appendix
\section{Ramified Ahlfors-Schwarz Lemma}\label{app}
In this appendix we shall give the proof of Lemma~\ref{ram-AS}.

\begin{proof}[Proof of Lemma~\ref{ram-AS}]
%After rescaling of $g$ we can assume that $\sqrt{-1} \frac{\partial}{\partial z}\frac{\partial}{\partial {\bar{z}}}\,\log g(z) \geq g(z)$. 
We define the ratio function $\displaystyle \phi:= \frac g {\psi^*h}$ on $C'$. It is non-negative, continuous and smooth on $\mathcal U$. We need to show that $\phi$ is bounded from above by $\displaystyle\frac1 c>0$. When $\phi$ is identically zero, i.e., when ${\mathcal U}= \emptyset$, there is nothing to prove. Hence we assume ${\mathcal U}\ne \emptyset$.\\[2mm]
Let $\mathbb{D} \to C$ be the uniformization by the Poincar\'e unit disc $\mathbb{D}= \{w \in \mathbb{C}\,;\,|w|<1\}$ and $\mathbb{D}':= \mathbb{D} \times_C C'$ be the fiber product. Then $h$ pulls back to the Poincar\'e metric on $\mathbb D$ and then to a ramified metric on ${\mathbb D}'$, all of constant curvature $-1$.
We denote by $\hat{g}$, $\hat{h}$ and $\hat{\phi}$ the pull back of $g$, $\psi^*h$ and $\phi$ to the ``ramified disc'' $\mathbb{D}'$. 
Define $\mathbb{D}_{\varepsilon}:= \{z \in \mathbb{C}\,;\, |z|<1-\varepsilon\}$
for $0 < \varepsilon \ll 1$
and consider the following Cartesian diagram
\[
\xymatrix{
\mathbb{D}'_{\varepsilon} \ar@{^{(}->}[r] \ar[d]^{\psi_{\varepsilon}} & \mathbb{D}' \ar[d]^{\psi_{\mathbb{D}}} \\
\mathbb{D}_{\varepsilon} \ar@{^{(}->}[r] & \mathbb{D}.
}
\]
Denote by $\hat{h}_{\varepsilon}$ the pull back of the Poincar\'e metric on $\mathbb{D}_{\varepsilon}$ via $\psi_{\varepsilon}$. Then the new ratio function 
\[
\hat{\phi}_{\varepsilon}:= \frac{\hat{g}|_{\mathbb{D}'_{\varepsilon}}}{\hat{h}_{\varepsilon}}
\]
decays to zero as $z$ approaches $\partial \mathbb{D}'_{\varepsilon}=\{z\,;|z|=1\}$ by the completeness of the Poincar\'e metric. Thus $\hat{\phi}_{\varepsilon}$ can be regarded as a well-defined function on $\mathbb{D}'$ by extending it by zero and we have $\lim_{\varepsilon \to 0} \hat{\phi}_{\varepsilon} = \hat{\phi}$. %\\[3mm]
Now $\hat{\phi}_{\varepsilon}$ is positive on the nonempty open subset $\mathbb{D}'\cap {\mathcal U}$ and has compact support in $\mathbb{D}'_{\varepsilon}$. Hence $\hat{\phi}_{\varepsilon}$ (as well as its logarithm defined on this open subset) \emph{attains its maximum at some point} $w_0$ there by its continuity. The Hessian matrix of $\log \hat{\phi}_{\varepsilon}$ at $w_0$ is thus negative semi-definite. Hence, so is its trace $\Delta\,\log \hat{\phi}(w_0)$ and we get
\[
0 \geq \Delta\,\log \hat{\phi}_{\varepsilon}(w_0) = \Delta\,\log \hat{g}(w_0) - \Delta\,\log \hat{h}_{\varepsilon}(w_0)
\geq 2c \cdot \hat{g}(w_0) - 2\hat{h}_{\varepsilon}(w_0).
\]
We have used the functoriality of the curvature operator with respect to the pull back operation on metrics and that, by defintion, the Poincar\'e metric has curvature $-1$ on any disc. This implies
\[
\hat{\phi}_{\varepsilon}(w) \leq \mathrm{sup}\,\hat{\phi}_{\varepsilon} = \hat{\phi}_{\varepsilon}(w_0) \leq \frac{1}{c}. 
\]
Since $\lim_{\varepsilon \to 0}\hat{\phi}_{\varepsilon} = \hat{\phi}$ on $\mathbb{D}'$, it follows that $\hat{\phi}$ is bounded from above by $\displaystyle\frac 1 {c}$ on $\mathbb{D}'$. Therefore, so is $\phi$ on $C'$.
\end{proof}

The following corollary of the ramified Ahlfors-Schwarz lemma is completely standard.
\begin{cor}%\label{orbi-AS}
Let $\psi_{\mathbb{C}}:\, \mathbb{C}' \to \mathbb{C}$ be a holomorphic ramified covering map. Let $g = g(z)|\psi^*dt|^2$  be a ramified metric on $\mathbb{C}'$ with respect to $\psi_\mathbb{C}$ with nonempty ${\mathcal U}_g$. Then $g$ does not have strictly negative curvature on ${\mathcal U}_g$.
\end{cor}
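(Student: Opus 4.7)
The plan is to argue by contradiction: assume $g$ has Gaussian curvature bounded above by some $-c<0$ on ${\mathcal U}_g$, and derive a contradiction using Lemma~\ref{ram-AS} together with the exhaustion of $\mathbb{C}$ by Poincar\'e discs.

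First I would fix a radius $R>0$ and consider the open disc $\mathbb{D}_R=\{t\in\mathbb{C}\,;\,|t|<R\}\subset\mathbb{C}$, equipped with its Poincar\'e metric $h_R=\frac{4R^2}{(R^2-|t|^2)^2}|dt|^2$ of constant curvature $-1$. Pulling back by the restriction of $\psi_{\mathbb{C}}$ to $C'_R:=\psi_{\mathbb{C}}^{-1}(\mathbb{D}_R)$, which is again a holomorphic ramified covering map onto $\mathbb{D}_R$, the restriction $g|_{C'_R}$ is a ramified pseudometric with respect to $\psi_{\mathbb{C}}|_{C'_R}$ whose curvature remains bounded above by $-c$ on ${\mathcal U}_g\cap C'_R$. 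Since $\mathbb{D}_R$ is hyperbolic (its Poincar\'e metric has constant curvature $-1$), Lemma~\ref{ram-AS} applies and yields
\[
g\big|_{C'_R}\ \le\ \frac{1}{c}\,(\psi_{\mathbb{C}})^*h_R.
\]

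Next I would let $R\to\infty$. At any fixed point $p\in\mathbb{C}'$ with image $t=\psi_{\mathbb{C}}(p)$, once $R>|t|$ the above inequality applies and gives $g(p)\le \frac{1}{c}\,h_R(t)$; but $h_R(t)=\frac{4R^2}{(R^2-|t|^2)^2}|dt|^2\to 0$ as $R\to\infty$. Hence $g(p)=0$ for every $p\in\mathbb{C}'$, contradicting the hypothesis that ${\mathcal U}_g\ne\emptyset$ (where $g$ is by definition strictly positive).

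The main conceptual point, and the only thing that is not purely formal, is that the ramified Ahlfors--Schwarz lemma of Lemma~\ref{ram-AS} is already tailored to apply on $C'_R$ without having to pass to any Galois quotient: it compares $g$ directly to the pullback of a hyperbolic metric downstairs, so the exhaustion of $\mathbb{C}$ by Poincar\'e discs immediately propagates upstairs through the (possibly infinite) ramified cover $\psi_{\mathbb{C}}$. Once this is noted, the rest is the standard Liouville-type argument. There is no real obstacle; the only thing to double-check is that the normalization of $h_R$ indeed has curvature $-1$ and that $h_R(t)\to 0$ pointwise as $R\to\infty$, both of which are routine.
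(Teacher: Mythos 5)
Your proof is correct and follows essentially the same route as the paper's own proof: restrict $\psi_{\mathbb{C}}$ over the Poincar\'e disc $\mathbb{D}_R$, apply Lemma~\ref{ram-AS} to the resulting ramified cover $\psi_{\mathbb{C}}^{-1}(\mathbb{D}_R)\to\mathbb{D}_R$, and then let $R\to\infty$ to force $g\equiv 0$, contradicting ${\mathcal U}_g\ne\emptyset$. (Minor remark: your normalization $h_R=\frac{4R^2}{(R^2-|t|^2)^2}|dt|^2$ does give constant curvature $-1$, whereas the paper's displayed formula omits the factor $4$; this is only a harmless normalization discrepancy in the paper and does not affect the argument.)
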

\begin{proof}
Suppose that such a ramified pseudometric $g$ exists and ${\mathcal U}_g\ne \emptyset$. 
After rescaling of $g$ we can assume that $\kappa_g\le -1$. For a positive real number $R$ we denote the disc of radius $R$ by $\mathbb{D}_R:=\{t \in \mathbb{C}\,\,|\,\, |t| < R\}$ and its $\psi$-preimage by $\mathbb{D}'_R:=\psi^{-1}_{\mathbb{C}}(\mathbb{D}_R)$. Then the Poincar\'e metric $h$ on $\mathbb{D}_R$ is
\[
h = \frac{R^{-2}|dt|^2}{(1-|t|^2/R^2)^2}.
\]
and has curvature $\kappa_h=-1$.
Denote by $\psi$ the restriction of $\psi_{\mathbb{C}}$ on $\mathbb{D}'_R$. Then we can apply the ramified Ahlfors-Schwarz Lemma~\ref{ram-AS} to $\psi:\, \mathbb{D}'_R \to \mathbb{D}_R$ to obtain the inequality
\[
g \leq \frac{R^{-2}|\psi^*dt|^2}{(1-|\psi^*t|^2/R^2)^2}
\]
on $\mathbb{D}'_R$. Letting $R$ approach $\infty$ shows that $g \equiv 0$, a contradiction. 
\end{proof}

\end{document}